\documentclass[12pt, reqno]{amsart}

\pagestyle{myheadings}
\textheight=7 true in \textwidth=6 true in  
\hoffset=-0.5true in

\usepackage{amsmath,amsthm,amssymb}
\usepackage{amstext,amscd,amsfonts,amsbsy,amsxtra,latexsym,mathtools}
\newcommand{\C}{\mathbb C}
\renewcommand{\H}{\mathbb H}
\newcommand{\Z}{\mathbb Z}
\newcommand{\N}{\mathbb N}
\newcommand{\abcd}{\left(\begin{smallmatrix}a&b\\c&d\end{smallmatrix}\right)}
\DeclareMathOperator{\SL}{SL}
\newcommand{\eps}{\varepsilon}
\newcommand{\nequiv}{\not\equiv}
\renewcommand{\(}{\left(}
\renewcommand{\)}{\right)}
\allowdisplaybreaks

\usepackage{hyperref}
\hypersetup{
  colorlinks   = true, 
  urlcolor     = blue, 
  linkcolor    = blue, 
  citecolor    = red   
}

\setcounter{MaxMatrixCols}{10}

\newtheorem{theorem}{Theorem}[section]
\newtheorem*{cor*}{Corollary}
\newtheorem{lemma}[theorem]{Lemma}

\newtheorem{proposition}[theorem]{Proposition}
\newtheorem{corollary}[theorem]{Corollary}

\theoremstyle{remark}

\newtheorem*{remark}{Remark}

\numberwithin{equation}{section}

\begin{document}

\title[Jacobi forms and generalized Frobenius partitions]{Formulas for Jacobi forms and generalized Frobenius partitions}
\author[Bringmann, Rolen, Woodbury]{Kathrin Bringmann, Larry Rolen, Michael Woodbury}
\address{Mathematical Institute\\University of
Cologne\\ Weyertal 86-90 \\ 50931 Cologne \\Germany}
\email{kbringma@math.uni-koeln.de} 
\address{
Hamilton Mathematics Institute
\&
School of Mathematics \\
Trinity College \\
Dublin 2, Ireland}
\email{lrolen@tcd.maths.ie}
\address{Mathematical Institute\\University of
Cologne\\ Weyertal 86-90 \\ 50931 Cologne \\Germany}
\email{woodbury@math.uni-koeln.de}
\date{\today}
\thanks{The research of the first author is supported by the Alfried Krupp 
Prize for Young University Teachers of
the Krupp foundation and the research leading to these results 
receives funding from the European Research
Council under the European Unionâ's Seventh Framework Programme 
(FP/2007-2013) / ERC Grant agreement n.
335220 - AQSER}
\begin{abstract}
Since their introduction by Andrews, generalized Frobenius partitions have interested a number of authors, many of whom have worked out explicit formulas for their generating functions in specific cases. This has uncovered interesting combinatorial structure and led to proofs of a number of congruences.  In this paper, we show how Andrews' generating functions can be cast in the framework of Eichler and Zagier's Jacobi forms.  This reformulation allows us to compute explicit formulas for the generalized Frobenius partition generating functions (and in fact provides formulas for further functions of potential combinatorial interest), and it leads to a recursion formula to calculate them in terms of infinite $q$-products.  
\end{abstract}
\maketitle

\section{Introduction and statement of results}

Given a \emph{partition} of $n$, that is, a sequence of non-increasing positive integers summing to $n$, it is often useful to consider its associated Frobenius coordinates.  This is done by reading off the leg and arm lengths along the diagonal of its Ferrers diagram. This gives a bijection between partitions of $n$ and arrays
\begin{align}\label{eq:array}
 \left( \begin{array}{cccc}
    a_1 & a_2 & \cdots & a_m \\
    b_1 & b_2 & \cdots & b_m \end{array} \right)
\end{align}
with integral coordinates $a_1>a_2>\text{\dots}>a_m\geq 0$ and $b_1>b_2>\text{\dots}>b_m\geq 0$ such that $n=m+\sum_{j=1}^m(a_j+b_j)$. For example, associated to the partition $\lambda=(5,4,4,2,1)$ of $16$, we have the following diagram, with the boxes along the diagonal labeled with the symbol $\bullet$: 

\begin{figure}[h]
{\def\lr#1{\multicolumn{1}{|@{\hspace{.6ex}}c@{\hspace{.6ex}}|}{\raisebox{-.3ex}{$#1$}}}
\raisebox{-.6ex}{$\begin{array}[b]{*{5}c}\cline{1-5}
\lr{\bullet}&\lr{\phantom{x}}&\lr{\phantom{x}}&\lr{\phantom{x}}&\lr{\phantom{x}}\\\cline{1-5}
\lr{\phantom{x}}&\lr{\bullet}&\lr{\phantom{x}}&\lr{\phantom{x}}\\\cline{1-4}
\lr{\phantom{x}}&\lr{\phantom{x}}&\lr{\bullet}&\lr{\phantom{x}}\\\cline{1-4}
\lr{\phantom{x}}&\lr{\phantom{x}}\\\cline{1-2}
\lr{\phantom{x}}\\\cline{1-1}
\end{array}$}
}
\caption{The Ferrers diagram for the partition $(5,4,4,2,1)$}
\end{figure}

The leg length for any diagonal cell is then the number of boxes below it, and the arm length is the number of cells to its right. So the Frobenius coordinates for $\lambda$ are 
\begin{align*}
 \left( \begin{array}{ccc}
    4 & 2 & 1 \\
    4 & 2 & 0\end{array} \right)
    .
\end{align*}

A \emph{generalized Frobenius partition} or \emph{F-partition} is an array as in \eqref{eq:array} but where the rows are allowed to come from more general sets.  In \cite{Andrews}, Andrews introduced this notion and considered two particular types of F-partitions.  In this paper we are concerned with one of these types, namely \emph{the generalized Frobenius partitions in $k$-colors}.  This is obtained by requiring that the sequences $(a_j)_{1\leq j\leq m}$ and $(b_j)_{1\leq j\leq m}$ as in \eqref{eq:array} are each strictly decreasing sequences of integers selected from $k$ copies of the nonnegative integers $\N_0$.  By \emph{strictly decreasing}, we mean with respect to the following lexicographical ordering: if $m$ belongs to the $j$th copy of $\N_0$, which we denote by writing $m=m_j$, and $n=n_\ell$ belongs to the $\ell$th copy of $\N_0$, then $m_j<n_\ell$ precisely when $m<n$ or $m=n$ and $j<\ell$.


Following Andrews' notation, we denote the number of such partitions of $n$ by $c\phi_k(n)$ and define their generating function
\begin{equation}\label{eq:CPhik}
 C\Phi_k(q) := \sum_{n=0}^\infty c\phi_k(n)q^n.
\end{equation}
The special case of $k=1$ corresponds to the usual partition function. 

Given the fact the the partition function satisfies so many striking congruence relations, it is natural to ask whether $c\phi_k(n)$ also has simple congruences.  Indeed, the answer is yes, and there is a long history of results of this type.  In \cite{Andrews} it was shown that $c\phi_p(n)\equiv 0 \pmod{p^2}$ for primes $p\nmid n$.  These general results where extended in \cite{K2} and further in \cite{Sellers}.  The recent paper \cite{GS2014} proved that there exist several infinite families of congruences, for example $c\phi_{5N+1}(5n+4)\equiv 0\pmod{5}$ for any $n,N\in \N$. 

Using explicit realizations of $C\Phi_k$ in terms of $q$-series for small values of $k$, many authors have found additional congruences.  The case $k=3$ was studied in \cite{Kolitsch}, $k=4$ in \cite{BS2011,Lin,Sellers4,Xia}, and $k=6$ in \cite{BS2015,H}. The techniques of these papers are quite similar in that they rely on having an explicit $q$-series representation for $C\Phi_k$.  

The method of Andrews (outlined below) for finding such $q$-series was employed by him in the cases $k=2,3$, and it was generalized to other small values of $k$.  However, this procedure becomes increasingly tedious as $k$ grows.  In this paper we show that by using the language of Jacobi forms and extending the problem to a larger set of functions., the known formulas for the generating functions $C\Phi_k$ can be easily derived.

In addition to giving a robust method for deriving additional formulas, we expect moreover that considering $C\Phi_k$ from the point of view of Jacobi forms provides additional means by which congruences can be studied.  As an example of how ``modularity'' has been used previously, see \cite{Ono} (later extended in \cite{L}), which established congruence properties for $\phi_3(n) \pmod{7}$ by relating $\sum_{n\geq 1}\phi_3(9n)q^n$ to modular forms.

To describe our procedure, we recall the situation for the first few $k$.  Since $c\phi_1$ is the usual partition function, by the well-known formula of Euler, we have 
\begin{equation}\nonumber
C\Phi_1(q)=\frac{1}{\left(q;q\right)_{\infty}},
\end{equation}
where $(a;q)_\infty := \prod_{n=1}^\infty \left(1-aq^{n-1}\right)$.  In Corollary 5.1 of \cite{Andrews} (using a combinatorially defined representation of \eqref{eq:CPhik}), the expressions 
\begin{equation}\label{eq:Andrews2}
 C\Phi_2(q)=\frac{\left(q^2;q^4\right)_\infty}{\left(q;q^2\right)_\infty^4\left(q^4;q^4\right)_\infty}
\end{equation}
and
\begin{equation}\label{eq:Andrews3}
 C\Phi_3(q)
= \frac{\left(q^{12};q^{12}\right)_\infty\left(q^{6};q^{12}\right)_\infty^3}{\left(q;q^6\right)_\infty^5\left(q^5;q^6\right)_\infty^5\left(q^4;q^4\right)_\infty^2\left(q^3;q^6\right)_\infty^7} + 4q\frac{\left(q^{12};q^{12}\right)_\infty\left(q^4;q^4\right)_\infty}{\left(q^6;q^{12}\right)_\infty\left(q^2;q^4\right)_\infty\left(q;q\right)_\infty^3}
\end{equation}
is given.

The proofs of these formulas used the following observation, referred to as the ``General Principle'' in Section~3 of \cite{Andrews}.  That is, $C\Phi_k(q)$ is the constant term (with respect to the $\zeta$ variable) of the infinite product
\begin{equation}\label{eq:Andrewsprod}
(-\zeta q;q)_\infty^k(-\zeta^{-1};q)_\infty^k.
\end{equation}

We may describe this observation in terms of the {\it Jacobi theta function} ($q:=e^{2\pi i \tau}$), 
\begin{equation}\label{eq:theta}
 \vartheta(z;\tau) := \sum_{n\in \frac{1}{2}+\Z}e^{\pi i n^2\tau + 2\pi i n\(z+\frac{1}{2}\)},
\end{equation}
and the {\it Dedekind eta function} 
\begin{equation}\nonumber
  \eta(\tau) := q^{\frac{1}{24}}(q;q)_\infty.
\end{equation}
The Jacobi form of interest is then, for $k\in \N$, 
\begin{equation}\nonumber
F_k(z;\tau) := \left(\frac{-\vartheta\left(z+\frac12;\tau\right)}{q^{\frac1{12}}\eta(\tau)}\right)^k.
\end{equation}
Using \eqref{JTP} below, it is easy to show that the $\frac{k}{2}$th Fourier coefficient of $F_k(z;\tau)$ with respect to the variable $\zeta:=e^{2\pi i z}$ is $C\Phi_k(q)$.

Rather than working just with the constant term of \eqref{eq:Andrewsprod}, our method effectively finds all of the terms in the Fourier expansion (with respect to $z$) of $F_k$.  This is encoded in the so-called theta decomposition of $F_k$.  As described in Section~\ref{sec:prelims}, $F_k(z;\tau)$ has a theta decomposition.  In the case of $k=2\ell$ even, we have
 \[ F_{2\ell}(z;\tau) = \displaystyle{\sum_{a\pmod{2\ell}}H_{\ell,a}(\tau)\vartheta_{\ell,a}(z;\tau)} \]
with $\vartheta_{\ell,a}$ as defined in \eqref{eq:Jthetaam} below.  Our main result gives an iterative formula for the functions $H_{\ell,c}(\tau)$ in terms of $\theta_{m,a}(\tau):=\vartheta_{m,a}(0;\tau)=\sum_{n\in \Z}q^{\frac{(2mn+a)^2}{4m}}$.  This can then be used to calculate $C\Phi_k(q)$.
\begin{theorem}\label{main}
For any $j\in \Z$, let
\begin{equation}\label{eq:base}
  h_{1,j}(\tau):=\theta_{1,j+1}(\tau).
\end{equation}
Given $\{h_{\ell,c}(\tau): 0\leq c < 2\ell\}$, recursively define 
\begin{align}\label{eq:inducttwo}
 \begin{split}
 h_{\ell+1,b}(\tau) & := h_{1,b}(\tau)\theta_{\ell(\ell+1),b\ell}(\tau)h_{\ell,0}(\tau)
       +h_{1,b-\ell}(\tau)\theta_{\ell(\ell+1),b\ell-\ell(\ell+1)}(\tau)h_{\ell,\ell}(\tau) \\
  & \quad + \sum_{c=1}^{\ell-1} h_{1,b-c}(\tau)
    \big( \theta_{\ell(\ell+1),b\ell-c(\ell+1)}(\tau) 
          +\theta_{\ell(\ell+1),b\ell+c(\ell+1)}(\tau) \big) h_{\ell,c}(\tau),
 \end{split}
\\ \label{eq:inductone}
  h_{\ell+\frac12,b+\frac12}(\tau) & := \sum_{c\pmod{2\ell}} h_{\ell,c}(\tau)\theta_{\ell(2\ell+1),c(2\ell+1)-\ell(2b+1)}(\tau),
\end{align}
and for $\ell < b \leq 2\ell$, set $h_{\ell,b}(\tau):=h_{2\ell,2\ell-b}(\tau)$.  Then, for all $\ell\in \N$ and $\delta\in\{0,1\}$, 
\begin{equation}\label{equation2ldelta}
 (-1)^{\delta}\vartheta\Big(z+\frac12;\tau\Big)^{2\ell+\delta} = \sum_{b \pmod{2\ell+\delta}} h_{\ell+\frac{\delta}{2},b+\frac{\delta}{2}}(\tau)\vartheta_{\ell+\frac{\delta}{2},b+\frac{\delta}{2}}(z;\tau). 
\end{equation}
\end{theorem}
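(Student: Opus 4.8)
The plan is to prove \eqref{equation2ldelta} by induction on $\ell$, treating the two cases $\delta = 0$ (even power) and $\delta = 1$ (odd power) in tandem, and to verify that the recursions \eqref{eq:base}--\eqref{eq:inductone} exactly encode the multiplication of theta decompositions. First I would record the base case: for $\ell = 1$, $\delta = 0$, the identity $\vartheta(z+\tfrac12;\tau)^2 = \sum_{b \pmod 2} h_{1,b}(\tau)\vartheta_{1,b}(z;\tau)$ should follow directly from the definition \eqref{eq:base}, $h_{1,j} = \theta_{1,j+1}$, together with the product expansion of $\vartheta$ via the Jacobi triple product \eqref{JTP} and the elementary identity $\vartheta(z;\tau)^2 = \sum_{a \pmod 2}\theta_{1,a}(\tau)\vartheta_{1,a}(z;\tau)$, after the shift $z \mapsto z + \tfrac12$ is absorbed into the index (this is where the $j+1$ comes from). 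Here I would lean on the general fact, stated in Section~\ref{sec:prelims}, that any Jacobi form of index $m$ has a unique theta decomposition into the $\vartheta_{m,a}$, so that matching Fourier coefficients in $z$ suffices to identify the $h$'s.

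The heart of the argument is the inductive step, which splits according to the parity of the exponent increase. For the even-to-odd step I would write $\vartheta(z+\tfrac12;\tau)^{2\ell+1} = \vartheta(z+\tfrac12;\tau)^{2\ell}\cdot\vartheta(z+\tfrac12;\tau)$, substitute the inductive decomposition for the first factor and the $\delta=0$, $\ell=1$-type decomposition $\vartheta(z+\tfrac12;\tau) = \sum_{a}\theta_{?}\vartheta_{1/2,a+1/2}$ (really just a single theta, since index $1/2$), and then invoke the product formula for Jacobi theta functions: $\vartheta_{m_1,a_1}(z;\tau)\vartheta_{m_2,a_2}(z;\tau) = \sum_{\nu} \theta_{m_1 m_2 (m_1+m_2), \ldots}(\tau)\,\vartheta_{m_1+m_2, a_1+a_2 + 2m_1\nu}(z;\tau)$ (the standard rule recalled in Section~\ref{sec:prelims}). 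Collecting terms and reindexing the sum over $\nu$ should reproduce exactly \eqref{eq:inductone}, with the argument $c(2\ell+1) - \ell(2b+1)$ of the theta constant emerging from the cross-term $m_2 a_1 - m_1 a_2$ combination after clearing denominators. For the even-to-even step I would instead write $\vartheta^{2\ell+2} = \vartheta^{2\ell}\cdot\vartheta^2$ (or iterate the odd step twice), apply the index-$\ell$ decomposition and the index-$1$ decomposition of $\vartheta(z+\tfrac12;\tau)^2$, and again apply the theta product rule; the splitting of the sum in \eqref{eq:inducttwo} into the $c=0$ term, the $c=\ell$ term, and the symmetric pair $c$, $-c$ for $1 \le c \le \ell-1$ reflects the reflection symmetry $\vartheta_{m,a} = \vartheta_{m,-a}$ and $h_{\ell,b} = h_{\ell,2\ell-b}$ (the last relation being precisely the convention imposed for $\ell < b \le 2\ell$), so I would need to check carefully that the folded index set is consistent with that symmetry.

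I expect the main obstacle to be bookkeeping of the theta-index arithmetic: the product rule for $\vartheta_{m_1,a_1}\vartheta_{m_2,a_2}$ produces a theta constant whose index and argument are quadratic/bilinear in the original data, and one must verify that with $m_1 = \ell$, $m_2 = 1$ (and $m_1 = \ell$, $m_2 = \tfrac12$ in the half-integral case) these collapse to precisely $\theta_{\ell(\ell+1),\,b\ell \pm c(\ell+1)}$ and $\theta_{\ell(2\ell+1),\,c(2\ell+1)-\ell(2b+1)}$ respectively, including getting the modulus of the summation index $b$ right (mod $2\ell+2$, resp. mod $2\ell+1$). A secondary subtlety is the consistent treatment of half-integral indices $\ell + \tfrac12$: one should set up notation so that $\vartheta_{m,a}$ and $\theta_{m,a}$ make sense for $m \in \tfrac12\Z$ and $a$ in the appropriate coset, and check that the product rule still holds there; this is routine but must be stated. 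Once the index identities are confirmed, uniqueness of the theta decomposition closes the induction, and the claimed formula \eqref{equation2ldelta} follows for all $\ell$ and $\delta \in \{0,1\}$. Finally I would remark that specializing $z \to 0$ (or extracting the appropriate Fourier coefficient) recovers the $H_{\ell,a}$ of the theta decomposition of $F_k$ and hence, via the relation to $C\Phi_k$ noted above, the desired generating-function formulas.
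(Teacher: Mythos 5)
Your proposal is correct and follows essentially the same route as the paper: induction on the exponent, with the base case given by the theta decomposition of $\vartheta(z+\frac12;\tau)^2$, and the inductive step obtained by multiplying the known decomposition by $\vartheta(z+\frac12;\tau)^2$ (resp.\ by $\vartheta(z+\frac12;\tau)$) and expanding the resulting products $\vartheta_{1,\eps}\vartheta_{\ell,c}$ and $\vartheta(z+\frac12;\tau)\vartheta_{\ell,c}$ via the quadratic change of variables; the paper isolates exactly these two product expansions as Lemmas~\ref{lem:theta1eps} and~\ref{lem:onemore} (proved directly rather than quoted as a standard rule) and then collects coefficients of $\vartheta_{\ell+1,b}$ just as you describe, using $h_{\ell,b}=h_{\ell,2\ell-b}$ and the periodicity of $\theta_{m,a}$ to fold the sum into the form of \eqref{eq:inducttwo}.
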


\begin{remark}
The recursive formula that gives $h_{\ell+\frac12,b}(\tau)$ in terms of  $h_{\ell,b}(\tau)$ holds even if $\ell\in \frac12\Z$, but one must sum over half integers.  By further making the substitions $\ell\mapsto \ell+1/2$, $b\mapsto b-1/2$, and $c\mapsto c+1/2$, \eqref{eq:inductone} gives the formula
\begin{equation}\nonumber
   h_{\ell+1,b}(\tau) = \sum_{c\pmod{2\ell+1}} h_{\ell+\frac12,c+\frac12}(\tau)\theta_{(2\ell+1)(\ell+1),(2c+1)(\ell+1)-(2\ell+1)b}(\tau)
\end{equation}
which holds for $\ell\in\N$.
\end{remark}

\begin{corollary}\label{thetacor}
All Fourier coefficients of $\vartheta(z+1/2;\tau)$ can be given as combinations  of Dedekind eta functions and Klein forms.
In particular, there exists an algorithm to compute $C\Phi_k(q)$ as a sum of products of $q$-Pochhammer symbols.
\end{corollary}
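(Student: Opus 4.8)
The plan is to unwind the recursion of Theorem~\ref{main}. The first step is to prove, by induction on $\ell$, that every $h_{\ell,c}(\tau)$ and every $h_{\ell+\frac12,c+\frac12}(\tau)$ is a finite $\Z$-linear combination of \emph{products} of the unary theta functions $\theta_{m,a}(\tau)$. This is immediate for the base case \eqref{eq:base}, and the recursive steps \eqref{eq:inducttwo} and \eqref{eq:inductone}, together with the reflection $h_{\ell,b}(\tau)=h_{2\ell,2\ell-b}(\tau)$ for $\ell<b\le 2\ell$, build $h_{\ell+1,\cdot}$ and $h_{\ell+\frac12,\cdot}$ by forming sums of products of previously constructed $h$'s with the theta functions $\theta_{\ell(\ell+1),\ast}(\tau)$ and $\theta_{\ell(2\ell+1),\ast}(\tau)$; hence the stated class of functions is preserved at each stage. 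By \eqref{equation2ldelta}, the theta-decomposition coefficients of $\vartheta(z+\frac12;\tau)^{2\ell+\delta}$ are precisely the $h_{\ell+\frac{\delta}{2},b+\frac{\delta}{2}}(\tau)$, and dividing by $q^{(2\ell+\delta)/12}\eta(\tau)^{2\ell+\delta}$ (the sign $(-1)^{\delta}$ in \eqref{equation2ldelta} combining with the $(-1)^{2\ell+\delta}$ from the definition of $F_{2\ell+\delta}$ to give $1$) shows that every theta-decomposition coefficient $H_{\ell,a}$ of $F_{2\ell+\delta}$ equals $q^{-(2\ell+\delta)/12}\eta(\tau)^{-(2\ell+\delta)}$ times such a combination.

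Next I would convert each $\theta_{m,a}$ into an infinite product. Using $\theta_{m,a}=\theta_{m,-a}=\theta_{m,a+2m}$ one may assume $0\le a\le m$; then writing the defining sum as $\sum_{n\in\Z}q^{mn^2+an+\frac{a^2}{4m}}$ and applying the Jacobi triple product \eqref{JTP} gives
\[
 \theta_{m,a}(\tau)=q^{\frac{a^2}{4m}}\big(q^{2m};q^{2m}\big)_\infty\big(-q^{m-a};q^{2m}\big)_\infty\big(-q^{m+a};q^{2m}\big)_\infty .
\]
Thus, up to an elementary power of $q$, $\theta_{m,a}$ is a product of $q$-Pochhammer symbols; equivalently, after absorbing that power and adjusting by factors of $\eta$, it is a Klein form in the normalization recalled in Section~\ref{sec:prelims}. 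Combined with the first step and the appearance of $\eta(\tau)$ in the definition of $F_k$, this expresses each $h_{\ell,c}$ — and hence each coefficient $H_{\ell,a}$ occurring in the theta decomposition of a power of $\vartheta(z+\frac12;\tau)$ — in terms of Dedekind eta functions and Klein forms, which is the first assertion.

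For the statement on $C\Phi_k$, recall that $C\Phi_k(q)$ is the $\frac{k}{2}$th Fourier coefficient, in $\zeta=e^{2\pi i z}$, of $F_k$. Setting $k=2\ell+\delta$, equation \eqref{equation2ldelta} and the definition of $F_k$ give $F_k=q^{-k/12}\eta(\tau)^{-k}\sum_{b}h_{\ell+\frac{\delta}{2},b+\frac{\delta}{2}}(\tau)\vartheta_{\ell+\frac{\delta}{2},b+\frac{\delta}{2}}(z;\tau)$, and one reads off the coefficient of $\zeta^{k/2}$ from the explicit Fourier expansions of the $\vartheta_{\ell+\frac{\delta}{2},b+\frac{\delta}{2}}(z;\tau)$ in \eqref{eq:Jthetaam}: this selects finitely many of the $h_{\ell+\frac{\delta}{2},b+\frac{\delta}{2}}(\tau)$, each weighted by an explicit monomial $q^{\ast}$, and divides the result by $q^{k/12}\eta(\tau)^{k}$. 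Expanding the selected $h$'s via the first step and each occurring $\theta_{m,a}$ and $\eta$ via the product formulas above presents $C\Phi_k(q)$ as a finite sum of products of $q$-Pochhammer symbols. Since Theorem~\ref{main} makes every step finite and fully explicit, this constitutes an algorithm, proving the second assertion.

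The part requiring the most care is the index and exponent bookkeeping: one must track the half-integral shifts and the theta indices $b\ell\pm c(\ell+1)$, $c(2\ell+1)-\ell(2b+1)$, and so on in \eqref{eq:inducttwo}--\eqref{eq:inductone} correctly, reduce them modulo the relevant periods, and verify that after multiplying through by $q^{-k/12}\eta(\tau)^{-k}$ the various fractional powers of $q$ combine into the integral power series $C\Phi_k(q)\in\Z[[q]]$. By contrast, recognizing the $\theta_{m,a}$ as Klein forms is routine once their product expansions are in hand, and the ``algorithm'' claim is then essentially formal.
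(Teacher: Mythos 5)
Your proposal is correct and follows essentially the same route as the paper: Theorem~\ref{main} expresses the theta-decomposition coefficients (and in particular $h_{\frac k2,\frac k2}$, which is $C\Phi_k$ up to a power of $q$ and of $\eta$) as sums of products of the $\theta_{m,a}$, and the Jacobi triple product turns each $\theta_{m,a}$ into a $q$-Pochhammer product, i.e.\ an $\eta$-power times a Klein form. (Minor point in your favor: your product formula $\theta_{m,a}=q^{a^2/4m}(q^{2m};q^{2m})_\infty(-q^{m-a};q^{2m})_\infty(-q^{m+a};q^{2m})_\infty$ has the correct signs, consistent with $\theta_{1,0}=\varphi(q)$.)
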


The outline for the paper as follows.  In Section~\ref{sec:prelims} we give background and definitions for the ideas from the theory of Jacobi forms.  Then, in Section~\ref{sec:proofs} we first show how this theory leads readily to simple derivations of \eqref{eq:Andrews2} and \eqref{eq:Andrews3}, and then we prove Theorem~\ref{main} and Corollary~\ref{thetacor}.  Finally, in Section~\ref{sec:examples} we provide explicit realizations for $C\Phi_k(q)$ for $k=6,7,$ and $8$.

\section{Preliminaries}\label{sec:prelims}

\subsection{Jacobi forms}\label{JacobiDefnSection}
We first require the definition of Jacobi forms, whose theory was laid out in depth by Eichler--Zagier \cite{EZ}. For this, let $\Gamma$ be a congruence subgroup of $\SL_2(\Z)$, and let $\kappa,m\in \Z$.  A \emph{holomorphic Jacobi form of weight $\kappa$ and index $m$ on a congruence subgroup $\Gamma$} is a holomorphic function $\varphi:\C\times \H\to \C$ ($\H$ the complex upper half-plane) which satisfies the following conditions. 
\begin{itemize}
 \item[(i)] For all $\gamma=\abcd\in \Gamma$, 
 \[ \varphi\left(\frac{z}{c\tau+d};\frac{a\tau+b}{c\tau+d}\right)=(c\tau+d)^\kappa e^{\frac{2\pi i mcz^2}{c\tau+d}}\varphi(z;\tau). \]
 \item[(ii)] For all $\lambda,\mu\in \Z$,
 \[ \varphi(z+\lambda\tau+\mu;\tau)=e^{-2\pi i m\left(\lambda^2\tau+2\lambda z\right)}\varphi(z;\tau). \]
 \item[(iii)]  The function $\varphi$ has a Fourier expansion of the form
 \[ \varphi(z;\tau)= \sum_{n,r\in \Z} c(n,r)q^n\zeta^r\]
with $c(n,r)=0$ unless $4mn\geq r^2$.
\end{itemize}
\begin{remark}
As with the ordinary theory of modular forms, there are suitable modifications of this definition to allow for half-integral weight (and index), as well as multiplier systems (as arise in examples such as Jacobi's theta function below). For ease of exposition, we omit these technical definitions here, opting to only give the necessary transformations for the basic Jacobi form needed here, namely the Jacobi theta function.
\end{remark}

\subsection{The Jacobi theta function}\label{sec:JacTheta}
The Jacobi form which is of most importance in this paper is the Jacobi theta function, defined in \eqref{eq:theta}, which is well-known to  satisfy the transformation properties
\begin{equation*}
\vartheta(z;\tau+1) = e^{\frac{\pi i}{4}} \vartheta(z;\tau),\qquad
\vartheta\left(-\textstyle{\frac{z}{\tau};-\frac{1}{\tau}}\right) = i\sqrt{-i\tau}e^{\frac{\pi i z^2}{\tau}}\vartheta(z;\tau),
\end{equation*}
and is an example of a holomorphic Jacobi form of weight $1/2$ and index $1/2$.

Moreover, the Jacobi theta function satisfies the well known Jacobi triple product identity 
\begin{align}\label{JTP}
 \vartheta(z;\tau) =  -iq^{\frac18}\zeta^{-\frac12}\left(q;q\right)_\infty\left(\zeta q^{-1};q\right)_\infty\left(\zeta^{-1};q\right)_\infty.
\end{align}

\subsection{Theta decomposition}\label{sec:thetadecomp} 

The main structural result on Jacobi forms that we exploit in this paper is that every holomorphic Jacobi form can be expressed in terms of the Jacobi theta functions 
\begin{equation}\label{eq:Jthetaam}
 \vartheta_{m,a}(z;\tau) := \sum_{\substack{r\in \Z\\ r\equiv a\pmod{2m}}} q^{\frac{r^2}{4m}}\zeta^r = \sum_{n\in \Z} q^{\frac{(2mn+a)^2}{4m}} \zeta^{2mn+a}.
\end{equation}
In the case that $m$ and $a$ are half integers, which is relevant for this paper, we take the latter sum as the definition.

The following theorem shows that every Jacobi form has a theta decomposition.  

\begin{theorem}[Eichler-Zagier] \label{th:EZ}
Suppose $\varphi(z;\tau)$ is a holomorphic Jacobi form of weight $\kappa$ and index $m$.  Then 
 \[ \varphi(z;\tau) = \sum_{a\pmod{2m}}h_a(\tau)\vartheta_{m,a}(z;\tau), \]
where $(h_a(\tau))_{a\ ({\rm mod}\ 2m)}$ is a vector-valued modular form of weight $\kappa-1/2$.
\end{theorem}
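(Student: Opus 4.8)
<br>

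The plan is to prove the decomposition first as an identity of $q$-series and then upgrade the coefficient functions $h_a$ to a vector-valued modular form. Writing $\varphi(z;\tau) = \sum_{n,r\in\Z} c(n,r) q^n \zeta^r$ as in condition~(iii), I would exploit the elliptic transformation~(ii). The case $\lambda = 0$ merely records that $\varphi$ is $\Z$-periodic in $z$ (integer powers of $\zeta$), while the case $\lambda = 1$, $\mu = 0$ gives, after comparing Fourier coefficients on both sides of $\varphi(z+\tau;\tau) = e^{-2\pi i m(\tau + 2z)}\varphi(z;\tau)$, the relation $c(n,r) = c(n+r+m, r+2m)$. A direct check shows this transformation preserves the discriminant $D := 4mn - r^2$ and shifts $r$ by $2m$, so $c(n,r)$ depends only on $D$ and on $r \bmod 2m$. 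Fixing for each residue $a\bmod 2m$ a representative $a\in\{0,\dots,2m-1\}$ and setting
\[
 h_a(\tau) := \sum_{n\in\Z} c(n,a)\, q^{\frac{4mn - a^2}{4m}}
\]
(which is independent of the chosen representative by the above), a term-by-term comparison of the coefficient of each $\zeta^r$ with \eqref{eq:Jthetaam} shows $\varphi = \sum_{a\,(2m)} h_a \vartheta_{m,a}$. Condition~(iii) forces $D \geq 0$, so each $h_a$ is a holomorphic $q$-series with exponents bounded below; this is exactly what will supply holomorphy at the cusps at the end.

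Next I would record how the vector $\Theta := (\vartheta_{m,a})_{a\,(2m)}$ transforms under the generators $T\colon \tau\mapsto\tau+1$ and $S\colon (z,\tau)\mapsto(z/\tau,-1/\tau)$ of $\SL_2(\Z)$. From the definition \eqref{eq:Jthetaam} one finds that $T$ acts diagonally, $\vartheta_{m,a}(z;\tau+1) = e^{\frac{\pi i a^2}{2m}} \vartheta_{m,a}(z;\tau)$, while Poisson summation (equivalently, the $S$-transformation of $\vartheta$ quoted in Section~\ref{sec:JacTheta}) yields
\[
 \vartheta_{m,a}\left(\tfrac z\tau; -\tfrac1\tau\right) = \sqrt{\tfrac{\tau}{2mi}}\, e^{\frac{\pi i m z^2}{\tau}} \sum_{b\,(2m)} e^{-\frac{\pi i ab}{m}} \vartheta_{m,b}(z;\tau).
\]
Together these exhibit $\Theta$ as a vector-valued Jacobi form of weight $1/2$ and index $m$, transforming under a finite-dimensional representation $\rho$ of the metaplectic cover of $\SL_2(\Z)$ (the Weil representation attached to the discriminant form $\Z/2m\Z$).

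Finally I would combine the two ingredients. Applying an arbitrary $\gamma = \abcd \in \SL_2(\Z)$ to both sides of $\varphi = \sum_a h_a \vartheta_{m,a}$, the left side picks up $(c\tau+d)^{\kappa} e^{\frac{2\pi i m c z^2}{c\tau+d}}$ by condition~(i), while the right side picks up $(c\tau+d)^{1/2} e^{\frac{2\pi i m c z^2}{c\tau+d}}$ times $\rho(\gamma)$ acting on $\Theta$ (from the $T,S$-laws, extended to all of $\SL_2(\Z)$). The index factors $e^{\frac{2\pi i m c z^2}{c\tau+d}}$ cancel, and since the $\vartheta_{m,b}(z;\tau)$ are linearly independent as functions of $z$ — their $\zeta$-expansions are supported on disjoint residue classes mod $2m$ — I can match the coefficient of each $\vartheta_{m,b}$. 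This yields a transformation law of the shape $h_a\!\left(\frac{a\tau+b}{c\tau+d}\right) = (c\tau+d)^{\kappa - 1/2} \sum_{b\,(2m)} \overline{\rho(\gamma)}_{ab}\, h_b(\tau)$, i.e. $(h_a)$ is vector-valued modular of weight $\kappa - 1/2$ for the dual representation $\bar\rho$; holomorphy at the cusps is inherited from the lower bound $D \geq 0$ established in the first paragraph.

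The main obstacle is the middle step: establishing the $S$-transformation of $\Theta$ cleanly — the Gauss-sum computation and, crucially, the tracking of the metaplectic square-root branch of $\sqrt{\tau}$ — so that the automorphy factors match exactly and produce the half-integral weight shift $\kappa \mapsto \kappa - \tfrac12$ (rather than $\kappa + \tfrac12$). Once the representation $\rho$ is in hand, the coefficient-matching in the last step is purely formal, given the disjoint-support linear independence of the theta functions, and the first and last steps are routine bookkeeping with the Fourier expansion.
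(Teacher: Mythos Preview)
Your proof sketch is correct and is essentially the standard argument found in Eichler--Zagier's monograph \cite{EZ}: first use the elliptic transformation law to show that $c(n,r)$ depends only on $r\bmod 2m$ and the discriminant $4mn-r^2$, then define $h_a$ accordingly and verify the decomposition termwise, and finally derive the vector-valued modular transformation of $(h_a)$ from the Weil-representation transformation of $(\vartheta_{m,a})$ together with the known Jacobi transformation of $\varphi$.

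However, there is nothing in the paper to compare your proof against. The paper does not give its own proof of Theorem~\ref{th:EZ}; it is stated as a result of Eichler and Zagier with a reference to \cite{EZ}, and the paper explicitly remarks immediately afterward that the theorem is included only as a ``motivating principle'' and is not essential to the paper's arguments, since Theorem~\ref{main} is proved directly and independently. So while your outline is sound, the comparison you were asked to make is vacuous here: the paper simply cites the result rather than proving it.
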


So, in particular, a modified version of Theorem~\ref{th:EZ} covering a slightly more general class of Jacobi functions to which $F_k(z;\tau)$ belongs implies that $F_k(z;\tau)$ has a similar decomposition; Theorem~\ref{main} provides this decomposition directly.  Note that we include Theorem~\ref{th:EZ} as a motivating principle, but not as essential to our proof.  Indeed, Theorem~\ref{main} is deduced directly and independently of Theorem~\ref{th:EZ}.

For future reference, note that the modular forms $\theta_{m,a}(\tau)=\vartheta_{m,a}(0;\tau)$ satisfy the relations
\begin{equation}\label{eq:thetamarelation}
 \theta_{m,2mk\pm a}(\tau)= \theta_{m,a}(\tau),
\qquad \mbox{and}\qquad
 \theta_{m,a}(k\tau)= \theta_{km,ka}(\tau),
\end{equation}
for all $k\in \N$.

\subsection{Klein forms}\label{sec:Klein}
Further distinguished modular forms are obtained by specializing the Jacobi theta function to torsion points and dividing by a power of the $\eta$-function.  To be more precise, for $a\in \mathbb{R}$ we have {\it Klein forms} 
\begin{equation}\label{eq:Klein}
t_{a,0}(\tau):=-q^{\frac{a^2}{2}-\frac{a}{2}+\frac{1}{12}}\frac{\left(q^a;q\right)_\infty\left(q^{1-a};q\right)_\infty}{\left( q;q\right)_\infty^2}.
\end{equation}
Specifically, for $a\in \mathbb{Q}$, these functions are modular forms of weight $-1$, holomorphic in the upper half plane and with possible poles and zeros only at cusps.  (See \cite{KL}.)

\section{Proof of Theorem \ref{main} and Corollary \ref{thetacor}}\label{sec:proofs}

\subsection{First examples of Theorem \ref{main}}\label{sec:firstexamples}
This section is devoted to the cases $k=1,2$, a detailed analysis of which sheds light on the general procedure. The Fourier expansion
\begin{equation}\label{Theta12}
-\vartheta\(z+\frac12;\tau\) = \sum_{n\in \Z} q^{\frac12\(n+\frac12\)^2}\zeta^{n+\frac12}
\end{equation}
is an immediate consequence of \eqref{eq:theta}.  Therefore, 
\begin{equation}\nonumber
 F_1(z;\tau) = \sum_{n\in \Z}\left( \frac{q^{\frac12\(n+\frac12\)^2}}{q^{\frac{1}{12}}\eta(\tau)} \right) \zeta^{n+\frac12},
\end{equation}

We can rederive Andrews' formula for $C\Phi_2(q)$ similarly by employing the following result of \cite{BM2013}, where the authors used the notation $A(z;\tau):=\vartheta(z;\tau)^2/\eta(\tau)^6$.  Accounting for the shift $z\mapsto z+1/2$, they gave the following lemma. 

\begin{lemma}\label{lem:theta2}
The square of $\vartheta(z+\frac12;\tau)$ has the theta decomposition
 \[ \vartheta\left(z+\textstyle{\frac12};\tau\right)^2 = \theta_{1,1}(\tau)\vartheta_{1,0}(z;\tau)+\theta_{1,0}(\tau)\vartheta_{1,1}(z;\tau).\]
\end{lemma}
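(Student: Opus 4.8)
The plan is to prove the identity by directly squaring the Fourier expansion \eqref{Theta12} and regrouping the resulting double sum according to the parity of the exponent of $\zeta$.

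First, from \eqref{Theta12} one has
\[
 \vartheta\Big(z+\tfrac12;\tau\Big)^2 = \sum_{m,n\in\Z} q^{\frac12\left(m+\frac12\right)^2+\frac12\left(n+\frac12\right)^2}\zeta^{m+n+1}.
\]
The elementary identity $\left(m+\tfrac12\right)^2+\left(n+\tfrac12\right)^2 = \tfrac12\left((m+n+1)^2+(m-n)^2\right)$ suggests the change of variables $r:=m+n+1$ and $s:=m-n$, under which the $q$-exponent becomes $\frac{r^2+s^2}{4}$ while the $\zeta$-exponent is simply $r$. Since $m=\frac{r+s-1}{2}$ and $n=\frac{r-s-1}{2}$, the pair $(m,n)$ lies in $\Z^2$ precisely when $r+s$ is odd, and then $(r,s)$ ranges over all such pairs without repetition. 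As the series converges absolutely for $q$ in the unit disk, the rearrangement is justified, giving
\[
 \vartheta\Big(z+\tfrac12;\tau\Big)^2 = \sum_{\substack{r,s\in\Z \\ r+s\ \mathrm{odd}}} q^{\frac{r^2+s^2}{4}}\zeta^r.
\]

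Next I would split this sum over the residue $a$ of $r$ modulo $2$ and perform the inner sum over $s$. For $r\equiv a\pmod 2$ the admissible values of $s$ are exactly those with $s\equiv a+1\pmod 2$, and by the definition of $\theta_{1,\bullet}$ the inner sum equals $\sum_{s\equiv a+1\,(2)} q^{s^2/4}=\theta_{1,a+1}(\tau)$. Hence
\[
 \vartheta\Big(z+\tfrac12;\tau\Big)^2 = \sum_{a\,(\mathrm{mod}\ 2)} \theta_{1,a+1}(\tau)\,\vartheta_{1,a}(z;\tau),
\]
and invoking $\theta_{1,2}(\tau)=\theta_{1,0}(\tau)$ from \eqref{eq:thetamarelation} yields exactly the asserted decomposition.

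The argument is essentially bookkeeping; the only point needing a little care is verifying that the substitution $(m,n)\mapsto(r,s)$ is a bijection onto the set of opposite-parity pairs, which is what cleanly separates the $\zeta$-exponent from the remaining theta-series. (One could instead appeal to the uniqueness of the theta decomposition in Theorem~\ref{th:EZ} applied to the Jacobi form $\vartheta(z+\frac12;\tau)^2$, but the direct computation above is self-contained, and it is precisely the template for the general inductive step carried out in Theorem~\ref{main}.)
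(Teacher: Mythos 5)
Your proof is correct and follows essentially the same route as the paper: both square the Fourier expansion \eqref{Theta12}, pass to the sum/difference variables (your $r=m+n+1$, $s=m-n$ is just a shift of the paper's $R=r+s$, $S=r-s$), and split according to parity to read off $\theta_{1,1}\vartheta_{1,0}+\theta_{1,0}\vartheta_{1,1}$. The bookkeeping, including the bijection onto opposite-parity pairs, is accurate.
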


\begin{proof}
The following proof follows that in \cite{BM2013} but for the reader's convenience, we give it here.  First, using \eqref{Theta12}, note that
 \[  \vartheta\left(z+\textstyle{\frac12};\tau\right)^2 = \sum_{r,s\in\Z}q^{\frac12\(\(r+\frac12\)^2+\(s+\frac12\)^2\)}\zeta^{r+s+1}. \]
Thus, making the change of variables $R:=r+s$ and $S:=r-s$, we obtain
\begin{equation*}
 \vartheta\left(z+\textstyle{\frac12};\tau\right)^2 = \sum_{\substack{R,S\in \Z\\ R\equiv S\pmod{2}}}q^{\frac12\left((R+1)^2+S^2\right)}.
\end{equation*}
Depending on the parity of $R$ and $S$, this sum splits naturally into two parts.  The contribution from $R$ and $S$ even equals 
 \[ \sum_{S\in \Z} q^{S^2}\sum_{R\in \Z}q^{\left(R+\frac12\right)^2}\zeta^{2R+1} = \theta_{1,0}(\tau)\vartheta_{1,1}(z;\tau). \]
The contribution from $R$ and $S$ odd is 
 \[ \sum_{S\in \Z} q^{\left(S+\frac12\right)^2} \sum_{R\in \Z}q^{R^2}\zeta^{2R} = \theta_{1,1}(\tau)\vartheta_{1,0}(z;\tau). \]
Adding these together, the result follows.
\end{proof}

Two well-known consequences of the Jacobi triple product formula are
 \begin{align}\label{eq:theta0eta}
   \theta_{1,0}(\tau) =\ & \frac{\eta(2\tau)^5}{\eta(\tau)^2\eta(4\tau)^2} = \frac{\left(q^2;q^{2}\right)_\infty^5}{\left(q;q\right)_\infty^2\left(q^4;q^{4}\right)_\infty^2},\\
\nonumber
   \theta_{1,1}(\tau) =\ & 2\frac{\eta(4\tau)^2}{\eta(2\tau)} = 2q^{\frac14} \frac{\left(q^{4};q^{4}\right)_\infty^2}{\left(q^{2};q^{2}\right)_\infty}. 
 \end{align}
 In order to find the coefficient of $\zeta$ in $F_2(z;\tau)$, we use \eqref{eq:theta0eta} and Lemma~\ref{lem:theta2}.  This leads to
\begin{align*}
 C\Phi_2(q) = \frac{q^{\frac14}\theta_{1,0}(\tau)}{q^{\frac16}\eta(\tau)^2}\ & = \frac{\left(q^2;q^{2}\right)_\infty^5}{\left(q;q\right)_\infty^4\left(q^4;q^{4}\right)_\infty^2},
\end{align*}
which agrees with \eqref{eq:Andrews2}.

\subsection{Preliminary results}

We next prove two lemmas, both of which can be viewed as variations on Lemma~\ref{lem:theta2}.  These lemmas describe how one can obtain the theta decomposition of the weight and index $1$ Jacobi form (with multiplier) 
\begin{equation}\label{eq:fortwomorelem}
 \vartheta_{1,\eps}(z;\tau)\vartheta_{\ell,c}(z;\tau) 
\end{equation}
for $\eps\in\{0,1\}$, and for 
\begin{equation}\nonumber
 \vartheta\left( z+\frac12;\tau \right) \vartheta_{\ell,c}(z;\tau),
\end{equation}
respectively.  

With Lemma~\ref{lem:theta2} in hand, knowing the theta decomposition of \eqref{eq:fortwomorelem} with $\ell=1$ and $c\in\{0,1\}$ one can find the theta decomposition of $\vartheta(z+\frac12;\tau)^3$.  More generally, for any $\ell\in\N$, Lemma~\ref{lem:theta2} provides the key step to go from the theta decomposition of $\vartheta(z+\frac12;\tau)^\ell$ to that of $\vartheta(z+\frac12;\tau)^{\ell+2}$.

\begin{lemma}\label{lem:onemore}
Assume the notation as above.  Then, for $c,\ell\in \N_0$ with $\ell>c $, 
 \[ -\vartheta\left( z+\frac12;\tau \right) \vartheta_{\ell,c}(z;\tau) 
    = \sum_{a\pmod{2\ell+1}} \theta_{\ell(2\ell+1),c-2\ell a-\ell}(\tau)\vartheta_{\ell+\frac12,a+c+\frac12}(z;\tau). \]
\end{lemma}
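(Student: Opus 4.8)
The plan is to follow the template of the proof of Lemma~\ref{lem:theta2}: expand both factors as $q$-$\zeta$ series, collapse the resulting double sum by a linear change of the summation variables, and then complete a square so as to split the purely $\tau$-dependent coefficient (which will turn out to be a \emph{single} $\theta$-series) from the $z$-dependent factor (a Jacobi theta function $\vartheta_{m,a}$).

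First I would use \eqref{Theta12} together with the defining series \eqref{eq:Jthetaam}, with absolute convergence justifying the rearrangement, to write
\[
 -\vartheta\left(z+\tfrac12;\tau\right)\vartheta_{\ell,c}(z;\tau)
 = \sum_{\substack{n,r\in\Z\\ r\equiv c\pmod{2\ell}}}
   q^{\frac12\left(n+\frac12\right)^2 + \frac{r^2}{4\ell}}\,\zeta^{\,n+r+\frac12}.
\]
The exponent of $\zeta$ is $\rho := n+r+\frac12 \in \frac12+\Z$, and for each fixed $\rho$ and each fixed $r\equiv c\pmod{2\ell}$ the integer $n = \rho-\frac12-r$ is uniquely determined; hence the double sum reorganizes into an outer sum over $\rho\in\frac12+\Z$ and an inner sum over $r\equiv c\pmod{2\ell}$.

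The algebraic core is then the elementary identity
\[
 \tfrac12(\rho-r)^2 + \frac{r^2}{4\ell}
 = \frac{\rho^2}{2(2\ell+1)} + \frac{\bigl((2\ell+1)r - 2\ell\rho\bigr)^2}{4\ell(2\ell+1)},
\]
obtained by completing the square in $r$. Setting $M := \ell(2\ell+1)$ and $R := (2\ell+1)r - 2\ell\rho$, the affine map $r\mapsto R$ sends $c+2\ell\Z$ bijectively onto the single residue class $(2\ell+1)c - 2\ell\rho$ modulo $2M$, so the inner sum is exactly $\theta_{\ell(2\ell+1),\,(2\ell+1)c-2\ell\rho}(\tau)$. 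By the first relation in \eqref{eq:thetamarelation} this coefficient depends only on $\rho$ modulo $2\ell+1$. Grouping the outer $\rho$-sum by residues modulo $2\ell+1$ therefore identifies, for each representative $\rho_0$, the inner $z$-series with $\vartheta_{\ell+\frac12,\rho_0}(z;\tau)$, and writing $\rho_0 = a+c+\frac12$ (so that $a$ runs over a complete residue system mod $2\ell+1$) turns the $\theta$-index into $(2\ell+1)c - 2\ell\left(a+c+\frac12\right) = c - 2\ell a - \ell$, which is precisely the claimed formula.

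I expect the main obstacle to be purely organizational: keeping the half-integer indices straight, verifying that $r\mapsto R$ really is a bijection onto the correct residue class mod $2M$, and confirming that the $\theta$-coefficient is genuinely constant on each residue class of $\frac12+\Z$ modulo $2\ell+1$ — all of which reduce to the periodicity relations \eqref{eq:thetamarelation}. The hypothesis $\ell > c$ plays no role in this manipulation itself (one could always first reduce $\vartheta_{\ell,c}$ via \eqref{eq:Jthetaam}); it is imposed only so that all indices stay in the canonical ranges used when this lemma is fed into the inductive proof of Theorem~\ref{main}.
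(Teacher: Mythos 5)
Your proposal is correct and follows essentially the same route as the paper: both expand the product as a double theta series and diagonalize the quadratic form by a linear change of summation variables (the paper substitutes $(r,s)\mapsto(R,S)$ with $R+S\equiv 0\pmod{2\ell+1}$, which is the same computation as your completing-the-square in $r$ for fixed $\zeta$-exponent $\rho$), then reads off $\theta_{\ell(2\ell+1),c-2\ell a-\ell}$ as the coefficient of $\vartheta_{\ell+\frac12,a+c+\frac12}$. Your algebraic identity and the bijectivity/periodicity checks are all correct, so the argument goes through exactly as in the paper's proof.
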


\begin{proof}
As in the proof of Lemma~\ref{lem:theta2}, we see that
\begin{align}\nonumber
 -\vartheta\left( z+\frac12;\tau \right) \vartheta_{\ell,c}(z;\tau) 
  = & \sum_{r,s\in \Z} q^{ \frac12\left( s+\frac12\right)^2+\frac{(2\ell r+c)^2}{4\ell}} \zeta^{2\ell r+s+c+\frac12}.
\end{align}
We make the change of variables $r= \frac{R+S}{2\ell+1}, s = \frac{S-2\ell R}{2\ell+1}$. As $(r,s)$ runs through $\Z^2$, $(R,S)$ run through those elements in $\Z^2$ satisfying, as claimed, $R+S\equiv 0\pmod{2\ell+1}$.  Thus we find 
\begin{align*}
 -\vartheta&\left( z+\frac12;\tau \right) \vartheta_{\ell,c}(z;\tau) \\
  = & \sum_{\substack{R,S\in \Z \\ R+S\equiv 0 \pmod{2\ell+1}}} q^{ \frac{\ell}{2\ell+1}\left( R+\frac{c}{2\ell}-\frac12 \right)^2 + \frac{1}{2(2\ell+1)}\left( S+c+\frac12 \right)^2 }\zeta^{S+c+\frac12} \\
  = & \sum_{a\pmod{2\ell+1}} \left(
 \sum_{R\in \Z} q^{\frac{\ell}{2\ell+1}\left((2\ell+1)R-a+\frac{c}{2\ell}-\frac12\right)^2} 
\sum_{S\in\Z} q^{\frac{1}{2(2\ell+1)}\left( (2\ell+1)S+a+c+\frac12 \right)^2} \zeta^{(2\ell+1)S+a+c+\frac12} \right).
\end{align*}
From this, we conclude the claim.
\end{proof}

\begin{lemma}\label{lem:theta1eps}
With the same conditions as in Lemma \ref{lem:onemore}, we have that
 \[ \vartheta_{1,\eps}(z;\tau)\vartheta_{\ell,c}(z;\tau) = \sum_{a \pmod{\ell+1}}\theta_{\ell(\ell+1),(2a+\eps)\ell-c}(\tau)\vartheta_{\ell+1,2a+c+\eps}(z;\tau). \]
\end{lemma}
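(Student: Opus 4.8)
\subsection*{Proof proposal for Lemma~\ref{lem:theta1eps}}

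The plan is to follow the template set by the proofs of Lemma~\ref{lem:theta2} and Lemma~\ref{lem:onemore}: expand the left-hand side as a double series in $(r,s)\in\Z^2$, perform a linear change of summation variables which diagonalizes the quadratic form in the exponent of $q$ with respect to a basis whose first coordinate records the exponent of $\zeta$, and then split the resulting sum into residue classes to read off the theta decomposition. Concretely, using \eqref{eq:Jthetaam} one writes
\[ \vartheta_{1,\eps}(z;\tau)\vartheta_{\ell,c}(z;\tau) = \sum_{r,s\in\Z} q^{\frac{(2r+\eps)^2}{4}+\frac{(2\ell s+c)^2}{4\ell}}\zeta^{2(r+\ell s)+c+\eps}. \]

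The algebraic heart of the argument is the elementary identity
\[ \frac{(2r+\eps)^2}{4}+\frac{(2\ell s+c)^2}{4\ell} = \frac{\big(2(r+\ell s)+c+\eps\big)^2}{4(\ell+1)} + \frac{\big(2\ell(s-r)+c-\ell\eps\big)^2}{4\ell(\ell+1)}, \]
which is nothing but the decomposition $\ell u^2+v^2 = \tfrac{\ell}{\ell+1}(u+v)^2+\tfrac{1}{\ell+1}(v-\ell u)^2$ applied to $u=2r+\eps$, $v=2\ell s+c$. The point is that the exponent of $\zeta$ is exactly the quantity $2(r+\ell s)+c+\eps$ occurring in the first term, so that the two summands on the right control the $\vartheta_{\ell+1,\cdot}$--part and the $\theta_{\ell(\ell+1),\cdot}$--coefficient respectively.

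Next I would carry out the change of variables $(r,s)\mapsto (x,y):=(r+\ell s,\,s-r)$. Since $x+y=(\ell+1)s$ and $x-\ell y=(\ell+1)r$, this is a bijection from $\Z^2$ onto the sublattice $\{(x,y)\in\Z^2:x+y\equiv 0\pmod{\ell+1}\}$. Writing $x=(\ell+1)k+a$ with $a$ running over a set of residues modulo $\ell+1$ and $k\in\Z$, the congruence forces $y=(\ell+1)j-a$ with $j\in\Z$; the exponent of $q$ then separates as a function of $k$ plus a function of $j$, so the double sum over $(k,j)$ factors. The sum over $k$, carrying the factor $\zeta^{2x+c+\eps}=\zeta^{2(\ell+1)k+(2a+c+\eps)}$, is by definition $\vartheta_{\ell+1,\,2a+c+\eps}(z;\tau)$, while the sum over $j$ is $\theta_{\ell(\ell+1),\,c-\ell\eps-2\ell a}(\tau)$. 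Finally, using $\theta_{m,-b}(\tau)=\theta_{m,b}(\tau)$ from \eqref{eq:thetamarelation} one rewrites $c-\ell\eps-2\ell a$ as $(2a+\eps)\ell-c$, yielding the stated formula. The hypothesis $\ell>c$ is inherited from the surrounding setup and is not needed for this identity, which holds as an identity of $q$-series (equivalently, of holomorphic functions on $\C\times\H$) for all $c,\ell\in\N_0$.

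I do not expect any real obstacle here: the computation is routine and structurally identical to the two preceding lemmas. The only place demanding care is the bookkeeping of the change of variables — verifying that $(r,s)\mapsto(r+\ell s,\,s-r)$ maps \emph{onto} the full sublattice cut out by $x+y\equiv 0\pmod{\ell+1}$, and correctly tracking how the chosen residue $a$ propagates into the index $c-\ell\eps-2\ell a$ of the coefficient theta function before the symmetry $\theta_{m,-b}=\theta_{m,b}$ is applied.
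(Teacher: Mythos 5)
Your proof is correct, and it is exactly the argument the paper has in mind: the paper omits the proof of Lemma~\ref{lem:theta1eps}, stating only that it ``follows the same idea as that of Lemma~\ref{lem:onemore},'' and your expansion, quadratic-form splitting $\ell u^2+v^2=\tfrac{\ell}{\ell+1}(u+v)^2+\tfrac{1}{\ell+1}(v-\ell u)^2$, and change of variables onto the sublattice $x+y\equiv 0\pmod{\ell+1}$ is precisely that method. The bookkeeping checks out, including the final appeal to $\theta_{m,-b}=\theta_{m,b}$ to match the stated index $(2a+\eps)\ell-c$.
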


The proof of Lemma~\ref{lem:theta1eps} follows the same idea as that of Lemma \ref{lem:onemore}.  We leave it as an exercise for the interested reader.

\begin{remark}
After proving Lemma~\ref{lem:theta1eps}, Bruce Berndt informed the authors of a very similar result, known as Schroeter's formula, which gives the identity
 \[ T\left(x,q^a\right)T\left(x,q^b\right) = \sum_{k=0}^{a+b-1}y^kq^{bk^2}T\left(xyq^{2bk},q^{a+b}\right)T\left(y^ax^{-b}q^{2abk},q^{ab(a+b)}\right), \]
where $T(x,q):=\sum_{n\in \Z}x^n q^{n^2}$.  We were also made aware that the main theorem of \cite{Cao} is a generalization of Schroeter's theorem and that Lemmas~\ref{lem:theta2}--\ref{lem:theta1eps} should follow as well.
\end{remark}

\subsection{Proof of Theorem \ref{main}}

We are now ready to prove Theorem~\ref{main}.  Let $\ell\in \N$ and $\delta\in \{0,1\}$.  Our method of proof is to give an iterative procedure for obtaining $\vartheta(z+\frac12;\tau)^{2\ell+1+\delta}$ from $\vartheta(z+\frac12;\tau)^{2\ell}$.  Suppose that a decomposition
\begin{equation}\label{equation2l}
 \vartheta\Big(z+\frac12;\tau\Big)^{2\ell} = \sum_{c \pmod{2\ell}} h_{\ell,c}(\tau)\vartheta_{\ell,c}(z;\tau)
\end{equation}
with $h_{\ell,c}=h_{\ell,2\ell-c}$ is known to hold for $\vartheta(z+\frac12;\tau)^{2\ell}$.   By Lemma~\ref{lem:theta2}, \eqref{equation2l} is true in the case $\ell=1$.  This establishes \eqref{eq:base}.  We must then show that defining $h_{\ell+\frac{1-\delta}{2},b+\frac{\delta}{2}}$ as in \eqref{eq:inducttwo} and \eqref{eq:inductone}, it is also true that \eqref{equation2ldelta} holds.

We first treat the case $\delta=0$.  In this case, applying Lemmas~\ref{lem:theta2} and \ref{lem:theta1eps}, we find that 
\begin{align*}
 \vartheta &\Big(z+\frac12;\tau\Big)^{2(\ell+1)}
   =  \Big(\theta_{1,1}(\tau) \vartheta_{1,0}(z;\tau)+\theta_{1,0}(\tau)\vartheta_{1,1}(z;\tau)\Big) \vartheta\Big(z+\frac12;\tau\Big)^{2\ell} \\
    &=  \sum_{c\pmod{2\ell}}h_{\ell,c}(\tau)
    \left( \theta_{1,1}(\tau)\sum_{a\pmod{\ell+1}} \theta_{\ell(\ell+1),2a\ell-c}(\tau)\vartheta_{\ell+1,2a+c}(z;\tau)\right. \\
   &\qquad \qquad \qquad \qquad \quad
  + \left. \theta_{1,0}(\tau)\sum_{\alpha\pmod{\ell+1}} \theta_{\ell(\ell+1),(2\alpha+1)\ell-c}(\tau)\vartheta_{\ell+1,2\alpha+1+c}(z;\tau)\right).
\end{align*}

For a given $b\pmod{2\ell+2}$, we now collect all of the terms in which $\vartheta_{\ell+1,b}(z;\tau)$ appears.  In the sum over $a$, there is exactly one such term if $c\equiv b\pmod{2}$ and none otherwise.  This term comes from the unique $a$ such that $2a+c\equiv b \pmod{2\ell+2}$.  Similarly, in the sum over $\alpha$, we get one term exactly if $c\nequiv b\pmod{2}$, namely for the unique $\alpha$ for which $2\alpha+c+1\equiv b\pmod{2\ell+2}$.  Hence,
\begin{align*}
 h_{\ell+1,b}(\tau) = &
\sum_{\substack{c\pmod{2\ell}\\ c\equiv b \pmod{2}}} 
  \theta_{1,1}(\tau) \theta_{\ell(\ell+1),(b-c)\ell-c}(\tau)h_{\ell,c}(\tau) \\
 & \quad +
\sum_{\substack{c\pmod{2\ell}\\ c\nequiv b \pmod{2}}}
  \theta_{1,0}(\tau) \theta_{\ell(\ell+1),(b-c)\ell-c}(\tau)h_{\ell,c}(\tau). 
\end{align*}
Since $h_{1,b-c}$ is equal to $\theta_{1,1}$ or $\theta_{1,0}$ exactly depending on whether $b$ and $c$ have the same parity or not, this can be simplified to
\begin{equation}\label{eq:inductalmost}
 h_{\ell+1,b}(\tau) = \sum_{c\pmod{2\ell}}
  h_{1,b-c}(\tau) \theta_{\ell(\ell+1),b\ell-c(\ell+1)}(\tau)h_{\ell,c}(\tau).
\end{equation}
From this formula and the inductive hypothesis that $h_{\ell,b}=h_{\ell,2\ell-b}$, it follows that $h_{\ell+1,b}=h_{\ell+1,2\ell+2-b}$.  Using this and the fact that 
$\theta_{\ell(\ell+1),a}$ depends only on $a\pmod{2\ell(\ell+1}$, \eqref{eq:inductalmost} now simplifies readily to \eqref{eq:inducttwo}.

The case of $\delta=1$ corresponds to \eqref{eq:inductone}.  This case follows the same logic as above with the only difference being to employ Lemma~\ref{lem:onemore} instead of Lemma~\ref{lem:theta1eps}.  We leave the details to the reader.

\subsection{Proof of Corollary \ref{thetacor}}\label{sec:proofofCor}

Corollary \ref{thetacor} is a direct consequence of the fact that the function $\theta_{m,b}$ can be expressed as an infinite product in terms of $q$-Pochhammer symbols. Indeed, from \eqref{eq:theta}, \eqref{JTP}, and \eqref{eq:Klein}, it follows that
\begin{align}
 \theta_{m,b}(\tau) = & q^{\frac{b^2}{4m}}\left(q^{2m};q^{2m}\right)_\infty\left(q^{m-b};q^{2m}\right)_\infty\left(q^{m+b};q^{2m}\right)_\infty \nonumber \\
 = & -q^{\frac{m}{12}}\left( q^{2m};q^{2m}\right)_\infty^3 t_{\frac{1}{2}+\frac{b}{2m},0}(2m\tau). \label{eq:thetambasprod}
\end{align}
Up to a multiple by a power of $q$ and a power of $\eta$, $C\Phi_k(q)$ is $h_{\frac{k}{2},\frac{k}{2}}$.  Since Theorem~\ref{main} expresses  $h_{\frac{k}{2},\frac{k}{2}}$ as a combination of the functions $\theta_{m,b}$, the corollary is evident.

\section{Examples}\label{sec:examples}

In this section, whenever we are only dealing with functions of the single variable $\tau$, or equivalently $q$, in order to give cleaner formulas we drop the arguments of the function if possible except in the statements of the theorems.

\subsection{The case of $C\Phi_6$}\label{sec:cphi6}
The identities of \eqref{eq:thetamarelation} are used repeatedly in the following formulas.

Applying \eqref{eq:inducttwo} of Theorem~\ref{main} twice, we find first that 
\begin{equation}\label{eq:h2b}
  h_{2,0} = \theta_{1,1}^2\theta_{2,0}+\theta_{1,0}^2\theta_{2,2}, \quad
  h_{2,1} = 2\theta_{1,0}\theta_{1,1}\theta_{2,1}, \quad
  h_{2,2} = \theta_{1,1}^2\theta_{2,2}+\theta_{1,0}^2\theta_{2,0},
\end{equation}
and then that 
\begin{align}
  h_{3,0}  
  = &\ \theta_{1,1}\theta_{6,0}h_{2,0}+2\theta_{1,0}\theta_{6,3}h_{2,1}
   + \theta_{1,1}\theta_{6,6}h_{2,2}, 
\\ 
  h_{3,1}
  = &\ \theta_{1,0}\theta_{6,2}h_{2,0}+\theta_{1,1}(\theta_{6,1}
   + \theta_{6,5})h_{2,1}+\theta_{1,0}\theta_{6,4}h_{2,2} = h_{3,5},
\\
  h_{3,2}  
  = &\ \theta_{1,1}\theta_{6,4}h_{2,0}+\theta_{1,0}(\theta_{6,1}+\theta_{6,5})h_{2,1}+\theta_{1,1}\theta_{6,2}h_{2,2} = h_{3,4}, 
\\ 
  h_{3,3}  
  = &\ \theta_{1,0}\theta_{6,6}h_{2,0}+2\theta_{1,1}\theta_{6,3}h_{2,1}+\theta_{1,0}\theta_{6,0}h_{2,2}.\label{eq:h33}
\end{align}

From this expression, we can now give a formula for $C\Phi_6(q)$.
\begin{proposition}\label{prop:CPhi6}
The generating function $C\Phi_6(q)$ is equal to 
\begin{equation*} 
\frac{q^{\frac14}}{\eta(\tau)^6} \bigg(
   6\theta_{1,1}(\tau)^2\theta_{1,0}(\tau)\theta_{3,1}(3\tau)\theta_{2,1}(\tau) 
   + \theta_{1,0}(\tau)^3\big( \theta_{1,1}(6\tau)\theta_{1,1}(2\tau)+\theta_{1,0}(6\tau)\theta_{1,0}(2\tau) \big)
 \bigg) 
.
\end{equation*}
\end{proposition}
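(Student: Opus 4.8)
The plan is to extract $C\Phi_6(q)$ from the coefficient of $\zeta^3$ in the Fourier expansion of $F_6(z;\tau)$, which by definition equals $\vartheta(z+\tfrac12;\tau)^6/(q^{1/2}\eta(\tau)^6)$. By the $\delta=0$ case of Theorem~\ref{main} with $\ell=3$, the sixth power $\vartheta(z+\tfrac12;\tau)^6$ has the theta decomposition $\sum_{b\pmod 6}h_{3,b}(\tau)\vartheta_{3,b}(z;\tau)$, and the coefficient of $\zeta^3$ in $\vartheta_{3,b}(z;\tau)=\sum_{n}q^{(6n+b)^2/12}\zeta^{6n+b}$ picks out exactly the term with $6n+b=3$, i.e.\ $n=0$, $b=3$, contributing $q^{3/4}$. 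Hence the $\zeta^3$-coefficient of $\vartheta(z+\tfrac12;\tau)^6$ is $q^{3/4}h_{3,3}(\tau)$, and so $C\Phi_6(q)=q^{3/4-1/2}h_{3,3}(\tau)/\eta(\tau)^6=q^{1/4}h_{3,3}(\tau)/\eta(\tau)^6$. It therefore suffices to simplify the expression \eqref{eq:h33} for $h_{3,3}$ into the bracketed form in the statement.

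First I would substitute the formulas \eqref{eq:h2b} for $h_{2,0},h_{2,1},h_{2,2}$ into \eqref{eq:h33}, giving
\begin{align*}
 h_{3,3} = {}& \theta_{1,0}\theta_{6,6}\big(\theta_{1,1}^2\theta_{2,0}+\theta_{1,0}^2\theta_{2,2}\big)
   + 2\theta_{1,1}\theta_{6,3}\big(2\theta_{1,0}\theta_{1,1}\theta_{2,1}\big) \\
 &{} + \theta_{1,0}\theta_{6,0}\big(\theta_{1,1}^2\theta_{2,2}+\theta_{1,0}^2\theta_{2,0}\big).
\end{align*}
The cross term is immediately $4\theta_{1,1}^2\theta_{1,0}\theta_{6,3}\theta_{2,1}$, and I would collect the remaining four terms by grouping the two with coefficient $\theta_{1,0}\theta_{1,1}^2$ and the two with coefficient $\theta_{1,0}^3$. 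Using the relation $\theta_{m,2mk\pm a}=\theta_{m,a}$ from \eqref{eq:thetamarelation} (with $m=6$, so $\theta_{6,6}=\theta_{6,6}$ unchanged, $\theta_{6,0}=\theta_{6,0}$; one may also need $\theta_{2,0},\theta_{2,2}$ with $\theta_{2,2}=\theta_{2,0}$ only if applicable — here $\theta_{2,2}=\theta_{2,2}$ is kept distinct), the $\theta_{1,0}\theta_{1,1}^2$-group becomes $\theta_{1,0}\theta_{1,1}^2(\theta_{6,6}\theta_{2,0}+\theta_{6,0}\theta_{2,2})$ and the $\theta_{1,0}^3$-group becomes $\theta_{1,0}^3(\theta_{6,6}\theta_{2,2}+\theta_{6,0}\theta_{2,0})$.

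Next I would rewrite each product $\theta_{6,a}\theta_{2,b}$ in terms of the single index functions $\theta_{1,\cdot}$ at dilated arguments, using the second relation in \eqref{eq:thetamarelation}, namely $\theta_{m,a}(k\tau)=\theta_{km,ka}(\tau)$ read backwards: $\theta_{6,0}(\tau)=\theta_{1,0}(6\tau)$, $\theta_{6,6}(\tau)=\theta_{1,1}(6\tau)$ (since $6=\text{image of }1$ under scaling by $6$), $\theta_{2,0}(\tau)=\theta_{1,0}(2\tau)$, $\theta_{2,2}(\tau)=\theta_{1,1}(2\tau)$, and $\theta_{6,3}(\tau)=\theta_{2,1}(3\tau)$. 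After these substitutions the $\theta_{1,0}\theta_{1,1}^2$-group reads $\theta_{1,0}\theta_{1,1}^2\big(\theta_{1,1}(6\tau)\theta_{1,0}(2\tau)+\theta_{1,0}(6\tau)\theta_{1,1}(2\tau)\big)$ and the $\theta_{1,0}^3$-group reads $\theta_{1,0}^3\big(\theta_{1,1}(6\tau)\theta_{1,1}(2\tau)+\theta_{1,0}(6\tau)\theta_{1,0}(2\tau)\big)$. The expected main obstacle is precisely this bookkeeping: keeping the reductions $\theta_{6,a}\mapsto\theta_{1,a'}(6\tau)$ and $\theta_{2,b}\mapsto\theta_{1,b'}(2\tau)$ consistent and verifying that the $\theta_{1,0}\theta_{1,1}^2$-group truly vanishes against the claimed answer — in fact, comparing with the Proposition one sees that group must \emph{not} survive, which forces an identity among theta constants (a consequence of the classical relation $\theta_{1,0}(2\tau)\theta_{1,1}(6\tau)+\theta_{1,0}(6\tau)\theta_{1,1}(2\tau)$ reducing, or of $\theta_{1,1}^2$ absorbing into the $\theta_{2,1}$-term); I would double-check the index arithmetic in \eqref{eq:h2b}--\eqref{eq:h33} against Lemmas~\ref{lem:theta2} and \ref{lem:theta1eps} to confirm the cancellation, since this is where a sign or modulus slip is most likely. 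Finally, substituting into $C\Phi_6(q)=q^{1/4}h_{3,3}/\eta^6$ and absorbing the factor $4\theta_{1,1}^2\theta_{1,0}\theta_{2,1}(3\tau)$ plus the surviving $\theta_{1,0}^3$-group yields the stated formula once the coefficients $6$ and $1$ are matched (the $6$ arising as $4$ from the cross term plus $2$ from the collapse of the $\theta_{1,0}\theta_{1,1}^2$-group).
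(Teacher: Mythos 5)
Your reduction to $C\Phi_6(q)=q^{1/4}h_{3,3}(\tau)/\eta(\tau)^6$ and your substitution of \eqref{eq:h2b} into \eqref{eq:h33} reproduce the paper's computation exactly, arriving (correctly) at
\begin{equation*}
 h_{3,3}=4\theta_{1,0}\theta_{1,1}^2\theta_{2,1}\theta_{6,3}
 +\theta_{1,0}\theta_{1,1}^2\big(\theta_{2,0}\theta_{6,6}+\theta_{2,2}\theta_{6,0}\big)
 +\theta_{1,0}^3\big(\theta_{2,2}\theta_{6,6}+\theta_{2,0}\theta_{6,0}\big),
\end{equation*}
and the cosmetic rewriting via $\theta_{m,a}(k\tau)=\theta_{km,ka}(\tau)$ at the end is fine. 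The gap is the step where the middle group ``collapses.'' You infer that it must not survive by comparing with the claimed answer, and you propose to confirm this by rechecking the index arithmetic in \eqref{eq:h2b}--\eqref{eq:h33} against Lemmas~\ref{lem:theta2} and \ref{lem:theta1eps}. That cannot work: the middle group is correct as written, and no sign or modulus slip is hiding there. What is actually needed is the nontrivial theta-constant identity
\begin{equation*}
 \theta_{2,2}(\tau)\theta_{6,0}(\tau)+\theta_{2,0}(\tau)\theta_{6,6}(\tau)=2\theta_{2,1}(\tau)\theta_{6,3}(\tau),
\end{equation*}
which is precisely \eqref{eq:21lemA} of Lemma~\ref{lem:Theta3simplify}; it converts the middle group into $2\theta_{1,0}\theta_{1,1}^2\theta_{2,1}\theta_{6,3}$ and produces the coefficient $6=4+2$. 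Deducing this identity from the proposition you are trying to prove is circular, so as written your argument is incomplete.

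To close the gap you must prove the identity independently. The paper does this by writing the left-hand side as $\sum (q^{1/4})^{2R^2+6S^2}$ over pairs $R\nequiv S\pmod 2$, applying the change of variables $s=\frac{R+S}{2}$, $r=\frac{R-3S}{2}$ (which land in $\frac12+\Z$), and recognizing the result as $2\theta_{2,1}(\tau)\theta_{2,1}(3\tau)=2\theta_{2,1}(\tau)\theta_{6,3}(\tau)$; alternatively one can verify it with Sturm's theorem. Note also that your heuristic ``the group vanishes'' is not quite the right picture: it does not vanish, it merges with the cross term. With \eqref{eq:21lemA} supplied, the remainder of your write-up matches the paper's proof.
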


\begin{remark}
Note that Proposition~\ref{prop:CPhi6} agrees with the result of \cite{BS2015}.  Indeed, using the identities
\begin{gather}
 \varphi(q) = \theta_{1,0}(\tau), \quad
 \qquad 2q^{\frac14}\psi\left(q^2\right) = \theta_{1,1}(\tau), \label{eq:BS} \\
 4q\psi(q)^3\psi\left(q^2\right)\psi\left(q^3\right) = \theta_{1,1}(\tau)\theta_{1,0}(\tau)\theta_{2,1}(\tau)\theta_{2,1}(3\tau), \label{eq:BS3}
\end{gather}
where $\varphi(q):=\sum_{n\in\Z}q^{n^2}$ and $\psi(q):=\sum_{n=0}^\infty q^{\frac{n(n+1)}{2}}$ is the notation used in \cite{BS2015}, Proposition~\ref{prop:CPhi6} is equivalent to Theorem~2.1 of \cite{BS2015}.  The identitities in \eqref{eq:BS} follow readily from the definitions, and \eqref{eq:BS3} can be checked using Sturm's Theorem.
\end{remark}

The following lemma is used in the proofs of Proposition~\ref{prop:CPhi6} and Proposition~\ref{prop:CPhi7}.  Using Sturm's Theorem, its proof is immediate, however, we include an elementary direct proof with the hope that it could be generalized.  Such generalizations may allow one to simplify or even give a closed form for $h_{k,k}$ for arbitrary $k\in \frac12 \N_0$.

\begin{lemma}\label{lem:Theta3simplify}
We have 
\begin{align}
 \theta_{2,2}(\tau)\theta_{6,0}(\tau)+\theta_{2,0}(\tau)\theta_{6,6}(\tau)=&\ 2\theta_{2,1}(\tau)\theta_{6,3}(\tau), \label{eq:21lemA} \\
 \theta_{2,2}(\tau)\theta_{6,4}(\tau)+\theta_{2,0}(\tau)\theta_{6,2}(\tau)=&\ \theta_{2,1}(\tau)(\theta_{6,1}(\tau)+\theta_{6,5}(\tau)). \label{eq:21lemB}
\end{align}
\end{lemma}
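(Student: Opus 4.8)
The plan is to prove both identities in Lemma~\ref{lem:Theta3simplify} by the same elementary device used throughout the paper: expand each product of theta constants $\theta_{m,a}(\tau)=\sum_{n\in\Z}q^{(2mn+a)^2/(4m)}$ as a double sum over $\Z^2$, make a linear change of variables to diagonalize (or at least decouple) the exponent, and then match the resulting sums term by term. Concretely, for \eqref{eq:21lemA} I would write the left-hand side as
\begin{align*}
 \theta_{2,2}\theta_{6,0}+\theta_{2,0}\theta_{6,6}
 &= \sum_{r,s\in\Z}\Big(q^{\frac{(4r+2)^2}{8}+\frac{(12s)^2}{24}}+q^{\frac{(4r)^2}{8}+\frac{(12s+6)^2}{24}}\Big)\\
 &= \sum_{r,s\in\Z}\Big(q^{\frac{(2r+1)^2}{2}+6s^2}+q^{2r^2+\frac{3(2s+1)^2}{2}}\Big),
\end{align*}
and similarly expand $2\theta_{2,1}\theta_{6,3}=2\sum_{r,s}q^{(4r+1)^2/8+(12s+3)^2/24}=2\sum_{r,s}q^{(4r+1)^2/8+(4s+1)^2\cdot 3/8}$. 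The key is to find an invertible integral (or rational-with-controlled-denominator) substitution, analogous to $(r,s)\mapsto(r+s,r-s)$ in Lemma~\ref{lem:theta2} and $(r,s)\mapsto\big(\tfrac{R+S}{2\ell+1},\tfrac{S-2\ell R}{2\ell+1}\big)$ in Lemma~\ref{lem:onemore}, that sends the two diagonal quadratic forms on the left to the (non-diagonal, but unit-norm-form) expression on the right, with a congruence condition picking out exactly the needed residues.

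The cleanest route is probably to work at the level of Jacobi forms rather than with the constants directly: apply Lemma~\ref{lem:theta1eps} with $\ell=2$ (so index $2$ times index $1$ gives index $3$), which expresses $\vartheta_{1,\eps}(z;\tau)\vartheta_{2,c}(z;\tau)$ in terms of $\vartheta_{3,b}(z;\tau)$ with coefficients $\theta_{6,(2a+\eps)2-c}(\tau)$. Setting $z=0$ in such an identity, together with the symmetry relations \eqref{eq:thetamarelation} ($\theta_{m,2mk\pm a}=\theta_{m,a}$), collapses several of the $\vartheta_{3,b}(0;\tau)=\theta_{3,b}(\tau)$ terms and should produce linear relations among products $\theta_{1,\eps}\theta_{2,c}$ and $\theta_{3,b}$; comparing two such specializations (e.g.\ the $\eps=0$ and $\eps=1$ versions, or evaluating the index-$1$ factor $\vartheta_{1,\eps}$ at a torsion point) and using that $\theta_{3,b}$ depends only on $b\pmod 6$ should yield \eqref{eq:21lemA} and \eqref{eq:21lemB} after bookkeeping. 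Alternatively—and this is the fallback the authors themselves note—one invokes Sturm's Theorem: both sides of each identity are holomorphic modular forms of the same weight ($1$) on a common congruence subgroup (a $\Gamma_0(N)$ with $N$ a small multiple of $24$, with a character), so it suffices to check agreement of $q$-expansions up to the Sturm bound, which is a finite computation.

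First I would fix the modular data: each $\theta_{m,a}(\tau)$ is a weight-$\tfrac12$ form (with multiplier) on a congruence subgroup, the products on both sides are weight $1$, and \eqref{eq:thetamarelation} plus the product formula $\theta_{m,a}(\tau)=q^{a^2/(4m)}(q^{2m};q^{2m})_\infty(q^{m-a};q^{2m})_\infty(q^{m+a};q^{2m})_\infty$ from \eqref{eq:thetambasprod} pins down the level. Then I would carry out the direct combinatorial matching: for \eqref{eq:21lemA}, show that the map $(r,s)\mapsto(r+s,\,r-s)$ (or a mild variant, perhaps with a shift to handle the $\pm1$ offsets in the odd-index forms) sets up a bijection between the index set of $q^{(2r+1)^2/2+6s^2}+q^{2r^2+3(2s+1)^2/2}$ and two copies of the index set of $q^{(4u+1)^2/8+3(4v+1)^2/8}$, the two copies reflecting the two lattice cosets, giving the factor $2$; and do the analogous (slightly more delicate, since the right side is $\theta_{6,1}+\theta_{6,5}$ rather than $2\theta_{6,3}$) computation for \eqref{eq:21lemB}. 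The main obstacle I anticipate is precisely this: finding the correct unimodular (up to the right determinant) change of variables and tracking the resulting congruence conditions and multiplicities so that the $q$-series match \emph{exactly}, including the constant $2$ versus the split $\theta_{6,1}+\theta_{6,5}$—the same kind of parity-bookkeeping that appears in the proof of Theorem~\ref{main}, but here without a free $z$-variable to separate the terms for us. If the explicit substitution proves stubborn, the Sturm's-Theorem verification is guaranteed to close the argument.
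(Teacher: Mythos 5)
Your first (elementary) route is the one the paper actually takes, but as written it has a gap exactly where you flag one: the change of variables \emph{is} the entire content of the proof, and the candidate you propose to adapt, $(r,s)\mapsto(r+s,r-s)$, cannot work here. That map is adapted to the form $x^2+y^2$ (it is what diagonalizes $\vartheta(z+\frac12;\tau)^2$ in Lemma~\ref{lem:theta2}), whereas after clearing denominators the left-hand side of \eqref{eq:21lemA} is $\sum (q^{1/4})^{2R^2+6S^2}$ over pairs with $R\nequiv S\pmod 2$, so what is needed is a rational automorph of the form $2x^2+6y^2$. The paper uses $s=\frac{R+S}{2}$, $r=\frac{R-3S}{2}$: one checks that $2r^2+6s^2=2R^2+6S^2$, that the parity condition $R\nequiv S\pmod2$ translates into $r,s\in\frac12+\Z$ with $r\equiv s\pmod 2$, and that splitting this last congruence into the two cosets $j\equiv k\equiv 0$ and $j\equiv k\equiv1\pmod2$ (writing $r=j+\frac12$, $s=k+\frac12$) yields two identical copies of $\theta_{2,1}(\tau)\theta_{2,1}(3\tau)=\theta_{2,1}(\tau)\theta_{6,3}(\tau)$ --- that is where the factor $2$ comes from, and no extra ``shift to handle the $\pm1$ offsets'' is needed. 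Without exhibiting such a substitution the elementary argument does not close. Your second suggested route (specializing Lemma~\ref{lem:theta1eps} at $z=0$) produces relations among products of the shape $\theta_{1,\eps}\theta_{2,c}$ and $\theta_{6,\ast}\theta_{3,\ast}$, not the products $\theta_{2,\ast}\theta_{6,\ast}$ occurring in the lemma, so it would require substantial additional work to be made to bear on \eqref{eq:21lemA} and \eqref{eq:21lemB}.

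Your fallback is legitimate in principle: both sides are weight-one forms on a common congruence subgroup with character (levels readable from \eqref{eq:thetambasprod}), and the paper itself remarks that the lemma is immediate from Sturm's theorem. But to count as a proof this version still needs the common level and character pinned down, the Sturm bound computed, and the finitely many coefficients checked; as stated it is a guaranteed-to-succeed plan rather than a completed argument.
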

 
\begin{proof}
Using the definitions and simplifying, we find that
 \[ \theta_{2,2}(\tau)\theta_{6,0}(\tau)+\theta_{2,0}(\tau)\theta_{6,6}(\tau)
 = \sum_{\substack{R,S\in \Z\\R\nequiv S \pmod{2}}} \(q^{\frac14}\)^{2R^2+6S^2}. \]

We now make the change of variables $s = \frac{R+S}{2},\ r = \frac{R-3S}{2}$.
Since $R$ and $S$ have opposite parity, $r,s\in \frac12+\Z$.  To ease the notation we also replace $q^{\frac14}$ with $q$.  Applying this change of variables gives 
\begin{equation*}
 \sum_{\substack{R,S\in \Z\\R\nequiv S \pmod{2}}} q^{2R^2+6S^2}
  = \sum_{\substack{r,s\in \frac12+\Z\\r\equiv s \pmod{2}}} q^{2r^2+ 6s^2}.
\end{equation*}  
We now write $r=1/2+j$ and $s=1/2+k$.  Since the condition $r\equiv s\pmod{2}$ is equivalent to $j\equiv k\pmod{2}$, it follows that this is equal to
\begin{align*}
 \sum_{\substack{j,k\in \Z\\j\equiv k \pmod{2}}} q^{2\(j+\frac12\)^2+6\(k+\frac12\)^2} 
  & = \sum_{j,k\in \Z} \(q^4\)^{2\(j+\frac14\)^2+6\(k+\frac14\)^2}
   + \sum_{j,k\in \Z} \(q^4\)^{2\(j-\frac14\)^2+6\(k-\frac14\)^2}.
\end{align*}
It is easy to see, replacing $q^4$ with $q$, that each of these final two sums is equal to $\theta_{2,1}(3\tau)\theta_{2,1}(\tau)$.  This proves \eqref{eq:21lemA}.  The identity \eqref{eq:21lemB} is similar; we omit the details.
\end{proof}

\begin{remark}
The terms 
\[ \theta_{2,0}\theta_{6,0}+\theta_{2,2}\theta_{6,2}\qquad\mbox{and}\qquad
    \theta_{2,0}\theta_{6,4}+\theta_{2,2}\theta_{6,2} \]
appear in the formulas for $C\Phi_6$ and $C\Phi_7$, and so it is natural to ask whether relations similar to \eqref{eq:21lemA} and \eqref{eq:21lemB} exist for these.  We do not, however, believe that such simple relations hold in these cases.
\end{remark}

\begin{proof}[Proof of Proposition~\ref{prop:CPhi6}]
From Theorem~\ref{main}, we see that
\begin{align}\label{eq:CPhi6asCoeff}
 &C\Phi_6(q)=\mathrm{coeff}_{\left[\zeta^3\right]}\( \frac{\vartheta(z+\frac12;\tau)^6}{q^{\frac12}\eta(\tau)^6} \)
 =\frac{ \mathrm{coeff}_{\left[\zeta^3\right]}\vartheta_{3,3}(z;\tau)}{ q^{\frac12}\eta(\tau)^6 } h_{3,3}(\tau).
\end{align}
In order to obtain a more explicit formula for $C\Phi_6$, we plug the results of \eqref{eq:h2b} into \eqref{eq:h33}.  This yields
\begin{align}
 h_{3,3} &= 4\theta_{1,0}\theta_{1,1}^2\theta_{2,1}\theta_{6,3}
    + \theta_{1,0} \theta_{1,1}^2\big( \theta_{2,0}\theta_{6,6}+\theta_{2,2}\theta_{6,0}\big)
    + \theta_{1,0}^3 \big( \theta_{2,2}\theta_{6,6}+\theta_{2,0}\theta_{6,0}\big)  \nonumber
\\
 & = 6\theta_{1,0}\theta_{1,1}^2\theta_{2,1}\theta_{6,3}
    + \theta_{1,0}^3 \big( \theta_{2,2}\theta_{6,6}+\theta_{2,0}\theta_{6,0}\big), \label{eq:CPhi6Final}
\end{align}
using Lemma~\ref{lem:Theta3simplify} in the last step.  Since $\mathrm{coeff}_{[\zeta^3]}\vartheta_{3,3}(z;\tau)=q^{\frac34}$, the desired result follows from \eqref{eq:CPhi6asCoeff} and \eqref{eq:CPhi6Final}.
\end{proof}

\subsection{The case of $C\Phi_7$}

From Theorem~\ref{main} and \eqref{eq:thetamarelation}, after some simplification, we see that
\begin{align*}
 h_{\frac72,\frac72} = & \sum_{c\pmod{6}} h_{3,c} \theta_{21,7c-21} 
 = h_{3,0} \theta_{21,21} + 2h_{3,1}\theta_{21,14} + 2h_{3,2}\theta_{21,7} + h_{3,3}\theta_{21,0}.
\end{align*}
Using the expression for $h_{3,c}$ and $h_{2,c}$ given in equations~\eqref{eq:h2b}--\eqref{eq:h33}, we can write this completely in terms of $\theta_{p,q}$.  Upon simplifying, $h_{\frac72,\frac72}$ is equal to
\begin{align}
 &  \Big(\theta_{1,1}\theta_{6,0}\theta_{21,21} 
   + 2\theta_{1,0}\theta_{6,2}\theta_{21,14}
   + 2\theta_{1,1}\theta_{6,4}\theta_{21,7}
   + \theta_{1,0}\theta_{6,6}\theta_{21,0}\Big)
    \Big( \theta_{1,1}^2\theta_{2,0}+\theta_{1,0}^2\theta_{2,2} \Big) \nonumber \\
 & \quad +  
  4\theta_{1,0}\theta_{1,1}\theta_{2,1}
   \Big(\theta_{6,3}(\theta_{1,0}\theta_{21,21}+\theta_{1,1}\theta_{21,0})
   + (\theta_{6,1} + \theta_{6,5})
     (\theta_{1,1}\theta_{21,14}+\theta_{1,0}\theta_{21,7})\Big) \nonumber \\
 & \quad +  
  \Big(\theta_{1,1}\theta_{6,6}\theta_{21,21}
   + 2\theta_{1,0}\theta_{6,4}\theta_{21,14}
   + 2\theta_{1,1}\theta_{6,2}\theta_{21,7}
   + \theta_{1,0}\theta_{6,0}\theta_{21,0}\Big)
    \Big( \theta_{1,1}^2\theta_{2,2}+\theta_{1,0}^2\theta_{2,0} \Big). \label{eq:h7272}
\end{align}

With this in hand, we deduce the following result.

\begin{proposition}\label{prop:CPhi7}
We have $C\Phi_7(q)=h_{\frac72,\frac72}(\tau)/(q;q)_\infty^7$ with $h_{\frac72,\frac72}$ equal to
\footnotesize
 \begin{align*}
 &
6\theta_{1,0}\theta_{1,1}\theta_{2,1}
    \Big(\theta_{6,3}\big(\theta_{1,0}\theta_{21,21}+\theta_{1,1}\theta_{21,0}\big)
   + \big(\theta_{1,1}\theta_{21,14} + \theta_{1,0}\theta_{21,7}\big)
     \big(\theta_{6,1} + \theta_{6,5}\big)\Big) \\
 & \quad +
\Big(\theta_{1,0}^3\theta_{21,0}+\theta_{1,1}^3\theta_{21,21}\Big)
   \Big(\theta_{2,0}\theta_{6,0}+\theta_{2,2}\theta_{6,6} \Big) 
    + 2\Big( \theta_{1,0}^3\theta_{21,14} + \theta_{1,1}^3\theta_{21,7} \Big) 
   \Big(\theta_{2,0}\theta_{6,4}+\theta_{2,2}\theta_{6,2} \Big).
 \end{align*}
\normalsize
\end{proposition}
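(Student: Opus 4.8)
The plan is to follow the same strategy that produced Proposition~\ref{prop:CPhi6}, namely to start from the fully expanded expression \eqref{eq:h7272} for $h_{\frac72,\frac72}$ and collect terms into the claimed form, using the multiplication relations \eqref{eq:thetamarelation} throughout. First I would expand the three large products in \eqref{eq:h7272}, grouping the resulting monomials in the $\theta_{p,q}$ according to which power of $\theta_{1,0},\theta_{1,1}$ and which of $\theta_{2,0},\theta_{2,1},\theta_{2,2}$ they contain. The terms carrying $\theta_{2,1}$ all come from the middle line of \eqref{eq:h7272} and already have the shape displayed in the first line of the Proposition, with a factor $4$; the remaining $\theta_{2,1}$-contribution of weight $2$ must be produced from the $\theta_{2,0},\theta_{2,2}$ terms via Lemma~\ref{lem:Theta3simplify} (applied with $\tau$ replaced by $7\tau$ in the appropriate places, using $\theta_{21,a}=\theta_{3,a}(7\tau)$ up to the relation $\theta_{m,a}(k\tau)=\theta_{km,ka}(\tau)$), which is exactly why that lemma was isolated.

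The key step is to organize the leftover terms, those involving $\theta_{1,1}^2\theta_{2,0}$, $\theta_{1,0}^2\theta_{2,2}$ (from the first product) and $\theta_{1,1}^2\theta_{2,2}$, $\theta_{1,0}^2\theta_{2,0}$ (from the third product), multiplied against the $\theta_{6,j}\theta_{21,j'}$ factors. After multiplying in the surviving $\theta_{1,0}$ or $\theta_{1,1}$ from the first factor of each product, one gets monomials in $\theta_{1,0}^3,\theta_{1,0}^2\theta_{1,1},\theta_{1,0}\theta_{1,1}^2,\theta_{1,1}^3$. The $\theta_{1,0}^2\theta_{1,1}$ and $\theta_{1,0}\theta_{1,1}^2$ pieces should, after applying Lemma~\ref{lem:Theta3simplify}, combine with the $\theta_{2,1}$-bearing terms to upgrade the coefficient from $4$ to $6$ exactly as in \eqref{eq:CPhi6Final}; the purely $\theta_{1,0}^3$ and $\theta_{1,1}^3$ pieces should reassemble into $(\theta_{1,0}^3\theta_{21,0}+\theta_{1,1}^3\theta_{21,21})(\theta_{2,0}\theta_{6,0}+\theta_{2,2}\theta_{6,6})$ and the analogous term with $\theta_{21,14},\theta_{21,7}$ and $\theta_{6,4},\theta_{6,2}$, after using $\theta_{21,21}=\theta_{21,-21}=\theta_{21,21}$-type symmetries and $\theta_{21,14}=\theta_{21,-14}=\theta_{21,28}$ to match indices. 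Finally one records $C\Phi_7(q)=h_{\frac72,\frac72}/(q;q)_\infty^7$ by the same computation as in \eqref{eq:CPhi6asCoeff}: $C\Phi_7$ is the $\zeta^{7/2}$-coefficient of $\vartheta(z+\tfrac12;\tau)^7/(q^{7/24}\eta(\tau))^7$, and $\mathrm{coeff}_{[\zeta^{7/2}]}\vartheta_{7/2,7/2}(z;\tau)$ contributes the appropriate power of $q$ that cancels the prefactor, leaving $1/(q;q)_\infty^7$.

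The main obstacle I anticipate is purely bookkeeping: keeping the indices of the $\theta_{6,j}$ and $\theta_{21,j'}$ straight through the many applications of \eqref{eq:thetamarelation}, since these indices are only defined modulo $12$ and modulo $42$ respectively, and the natural symmetrizations ($\theta_{m,a}=\theta_{m,-a}$) are what make the four blocks of \eqref{eq:h7272} collapse into the two lines of the Proposition. There is no conceptual difficulty beyond Lemma~\ref{lem:Theta3simplify}; the risk is a sign or index slip in the reduction, so I would double-check the final identity numerically by comparing $q$-expansions to high order (equivalently, invoke Sturm's Theorem, as the remark before Lemma~\ref{lem:Theta3simplify} notes is always available here).
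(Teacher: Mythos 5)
Your proposal follows essentially the same route as the paper: starting from the expanded expression \eqref{eq:h7272}, applying Lemma~\ref{lem:Theta3simplify} to the combinations $\theta_{2,2}\theta_{6,0}+\theta_{2,0}\theta_{6,6}$ and $\theta_{2,2}\theta_{6,4}+\theta_{2,0}\theta_{6,2}$ that arise from the first and third lines, so that the resulting $\theta_{2,1}$-terms upgrade the coefficient $4$ from the middle line to $6$, and then normalizing via the $\zeta^{7/2}$-coefficient exactly as in \eqref{eq:CPhi6asCoeff}. One small correction: the lemma is applied directly at $\tau$ to the $\theta_{2,j}\theta_{6,j'}$ factors, with the $\theta_{21,j''}$ factors inert, so no rescaling $\tau\mapsto 7\tau$ is needed.
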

\begin{proof}
Note that $\mathrm{coeff}_{[\zeta^{7/2}]}\vartheta(z;\tau)=q^{7/8}$.  Thus, in complete analogy to \eqref{eq:CPhi6asCoeff}, we have 
 \[ C\Phi_7(q)= q^{\frac{7}{24}} \frac{h_{\frac72,\frac72}(\tau)}{\eta(\tau)^7} 
  = \frac{h_{\frac72,\frac72}(\tau)}{(q;q)_\infty^7}, \]
with $h_{\frac72,\frac72}$ as in \eqref{eq:h7272}.  In order to further simplify this expression, we first consider only the terms coming from the first and third lines of \eqref{eq:h7272}.  After rearranging and applying \eqref{eq:21lemA} and \eqref{eq:21lemB}, it can be shown that they are equal to
\footnotesize
\begin{align*}
& 2\theta_{1,0}\theta_{1,1}\theta_{2,1}\theta_{6,3}\Big( \theta_{1,0}\theta_{21,21}+\theta_{1,1}\theta_{21,0}\Big) 
   + \Big(\theta_{1,0}^3\theta_{21,0}+\theta_{1,1}^3\theta_{21,21}\Big)
   \Big(\theta_{2,0}\theta_{6,0}+\theta_{2,2}\theta_{6,6} \Big) \\
 & + 2\Big( \theta_{1,0}^3\theta_{21,14} + \theta_{1,1}^3\theta_{21,7} \Big) 
   \Big(\theta_{2,0}\theta_{6,4}+\theta_{2,2}\theta_{6,2} \Big) +
   2\theta_{1,0}\theta_{1,1}\theta_{2,1}\Big(\theta_{6,1}+\theta_{6,5} \Big)\Big(\theta_{1,0}\theta_{21,7}+\theta_{1,1}\theta_{21,14}\Big).
\end{align*}
\normalsize
Plugging these back into \eqref{eq:h7272} and simplifying gives the desired formula for $h_{\frac72,\frac72}$.
\end{proof}

At this point, one could give $C\Phi_7(q)$ in terms of $\eta$ and Klein forms using \eqref{eq:thetambasprod}.  Such a formula would be quite long, however, so we refrain from writing it down explicitly.

\subsection{The case of $C\Phi_8$}
Repeating the method of the previous section, but using \eqref{eq:inducttwo}, we find that
\begin{align*}
 h_{4,4} = &\ h_{1,4}\theta_{12,12}h_{3,0} + h_{1,1}\theta_{12,0}h_{3,3} 
 + \sum_{c=1}^2 h_{1,4-c}\big( \theta_{12,12-4c} + \theta_{12,12+4c}\big)h_{3,c} \\
 = &\ \theta_{1,1} \theta_{12,12}h_{3,0} + 2\theta_{1,0}\theta_{12,8}h_{3,1} 
 + 2\theta_{1,1}\theta_{12,4}h_{3,2} + \theta_{1,0}\theta_{12,0}h_{3,3}.
\end{align*}
Now we insert the formulas for $h_{3,c}$ and $h_{2,c}$ as above to find that $h_{4,4}$ is equal to
\begin{align*}
 & \Big(\theta_{1,1}^2\theta_{12,12}\theta_{6,0} + 2\theta_{1,0}^2\theta_{12,8}\theta_{6,2}+2\theta_{1,1}^2\theta_{12,4}\theta_{6,4} + 2\theta_{1,0}^2\theta_{12,0}\theta_{6,6}\Big)\Big( \theta_{1,1}^2\theta_{2,0}+\theta_{1,0}^2\theta_{2,2} \Big) \\
 & \quad + 4\theta_{1,0}^2\theta_{1,1}^2\theta_{2,1}\Big( (\theta_{12,0}+\theta_{12,12})\theta_{6,3} + (\theta_{12,8}+\theta_{12,4})(\theta_{6,1}+\theta_{6,5}) \Big) \\
 & \quad + \Big(\theta_{1,1}^2\theta_{12,12}\theta_{6,6} + 2\theta_{1,0}^2\theta_{12,8}\theta_{6,4} + 2\theta_{1,1}^2\theta_{12,4}\theta_{6,2} + \theta_{1,0}^2\theta_{12,0}\theta_{6,0}\Big)\Big( \theta_{1,1}^2\theta_{2,2}+\theta_{1,0}^2\theta_{2,0} \Big).
\end{align*}

\end{document}